\author{Thomas John Baird}
\title{Antiperfection of Yang-Mills Morse theory over a nonorientable surface}
\newtheorem{thm}{Theorem}[section]
\newtheorem{cor}[thm]{Corollary}
\newtheorem{lem}[thm]{Lemma}
\newtheorem{prop}[thm]{Proposition}
\theoremstyle{definition}
\newtheorem{rmk}{Remark}
\newcommand{\ignore}[1]{}
\newcommand{\lie}[1]{\mathfrak{#1}}
\newcommand{\rk}{\mathrm{rk}}
\newcommand{\im}{\mathrm{im}}
\newcommand{\id}{\mathbbmss{1}}
\newcommand{\Z}{\mathbb{Z}}
\newcommand{\R}{\mathbb{R}}
\newcommand{\C}{\mathbb{C}}
\newcommand{\Q}{\mathbb{Q}}
\newcommand{\mZ}{\mathcal{Z}}
\newcommand{\ti}[1]{\tilde{#1}}
\newcommand{\gau}{\mathcal{G}}
\newcommand{\A}{\mathcal{A}}
\newcommand{\dbar}{\bar{\partial}}
\newcommand{\cov}{\tilde{\Sigma}}
\newcommand{\pcov}{\ti{E}}
\newcommand{\B}{\mathcal{B}}
\newcommand{\CC}{\mathcal{C}}
\newcommand{\h}{h}
\newcommand{\slope}{\mathrm{sl}}
\newcommand{\gr}{\mathrm{gr}}
\begin{document}



\maketitle

\begin{abstract}
We use Morse theory of the Yang-Mills functional to compute the Betti numbers of the moduli stack of flat U(3)-bundles over a compact nonorientable surface. Our result establishes the antiperfection conjecture of Ho-Liu, and provides evidence for the equivariant formality conjecture of the author.
\end{abstract}

\section{Introduction}

Let $\Sigma$ denote a compact, connected 2-manifold without boundary and $E \rightarrow \Sigma$ a smooth, Hermitian $\C^n$-bundle. Denote by $\A = \A(E)$ the space of unitary connections, $\A^{\flat}$ the subspace of flat connections and by $\gau = \gau(E)$ the group of unitary gauge tranformations. The gauge group $\gau$ acts on $\A$ and restricts to an action on $\A^{\flat}$. In this paper we compute the Poincar\'e series of the equivariant cohomology ring $H^*_{\gau}(\A^{\flat})$ when $n=3$ and $\Sigma$ is nonorientable.

Our proof will make use of Morse theory of the Yang-Mills functional. This theory was developed for orientable $\Sigma$ by Atiyah-Bott \cite{ab2} and for nonorientable $\Sigma$ by Ho-Liu \cite{ho2008msf}, \cite{hl2} and Ho-Liu-Ramras \cite{ho-liu-ramras}. The Yang-Mills functional is a smooth map $L: \A \rightarrow \R$ which is nonnegative valued and for which $L^{-1}(0) = \A^{\flat}$. The bundle $E$ admits flat connections precisely when the Chern class $c_1(E) \in H^2(\Sigma)$ is torsion, in which case $\A^{\flat}$ forms the minimizing set for $L$. The functional $L$ determines a Morse stratification 
\begin{equation}\label{utnos}
 \A = \coprod_{\mu \in I} \A_{\mu}.
\end{equation}
by finite codimension submanifolds $\A_{\mu}$, where the index set $I =I(E)$ will be described later and for present purposes may be identified with the nonnegative integers $\mu = 0,1,2,...$. The lowest stratum is called the \emph{semistable stratum} and is denoted $\A_{ss}$. When $c_1(E)$ is torsion, the flat connections include into the semistable stratum, inducing an homotopy equivalence of homotopy quotients 
\begin{equation}\label{steak}
\A^{\flat}_{h\gau} \sim (\A_{ss})_{h\gau}
\end{equation}
where for a $G$-space $X$ we use notation $X_{hG} \cong EG \times_G X$ for the homotopy quotient. By definition, $H_G^*(X) = H^*( X_{hG})$, so (\ref{steak}) in particular implies that $H_{\gau}^*(\A^{\flat}) \cong H_{\gau}^*(\A_{ss})$. These isomorphisms hold integrally, but for other reasons we will always work with rational coefficients.

Consider the filtration of $\A$ by subspaces,

$$ \A_{\leq \mu} = \coprod_{\nu \leq \mu} \A_{\nu}.$$
and the long exact sequence in equivariant cohomology of the pair $(\A_{\leq \mu}, \A_{< \mu})$, where $\A_{< \mu} = \A_{\leq \mu} - \A_{\mu}$. A tubular neighbourhood around $\A_{\mu}$ in $\A_{\leq \mu}$ is homeomorphic to the normal bundle $N_{\mu}$ of $\A_{\mu}$ and the homotopy quotient bundle $(N_{\mu})_{h\gau} \rightarrow (\A_{\mu})_{h\gau}$ is known to be orientable in the examples we will consider ( see \cite{ho-liu-ramras}). The rank of $N_{\mu}$ is called the index of $\A_\mu$ and is denoted $\lambda_{\mu}$. Using excision and the Thom isomorphism we have $$H_{\gau}^{*}( \A_{\leq \mu}, \A_{< \mu}) \cong H_{\gau}^*(N_{\mu}, N_{\mu}^o) \cong H_{\gau}^{* - \lambda_{\mu}}(N_{\mu})$$ where $N_{\mu}^o \subset N_{\mu}$ is the complement of the zero section. We obtain a morphism of long exact sequences:

\begin{equation}\begin{CD}\label{first}	
\xymatrix{\ar[r] & H_{\gau}^{*}( \A_{\leq \mu}, \A_{< \mu })\ar[d]^{\cong} \ar[r]^{\alpha_{\mu}} & H_{\gau}^*( \A_{\leq \mu}) \ar[d] \ar[r] & H_{\gau}^*(\A_{< \mu})  \ar[r]  &\\
 & H_{\gau}^{*-\lambda_{\mu}}(N_{\mu}) \ar[r]^{\beta_{\mu}}  &   H_{\gau}^*(N_{\mu}) &  &}
\end{CD}\end{equation}
where the map $\beta_{\mu}$ is the operation of cup product by the equivariant Euler class $Eul_{\gau}(N_{\mu}) = Eul( (N_{\mu})_{h\gau})$. 

When $\Sigma$ is orientable, Atiyah-Bott \cite{ab2} showed that the Euler classes $Eul_{\gau}(N_{\mu})$ were not zero divisors. Consequently, $\beta_{\mu}$ and hence also $\alpha_{\mu}$ are injective and the long exact sequence of the pair $(\A_{\leq \mu}, \A_{< \mu})$ splits into short exact sequences for all $\mu$. A Morse stratification satisfying this property is called \emph{perfect} and for a perfect stratification the Poincar\'e series of the full space is a sum of contributions from each stratum:

 $$ P_t^{\gau}(\A_{ss}) = P_t^{\gau}(\A) - \sum_{\mu \in I -0} t^{\lambda_{\mu}}P_t^{\gau}(\A_{ \mu})$$

On the other hand, when $\Sigma$ is nonorientable and the rank $n=2$ or $3$, Ho-Liu \cite{ho2008msf} show that $Eul_{\gau}(N_{\mu})$ vanishes so the maps $\beta_{\mu}$ equal zero. Ho and Liu call a Morse stratification with this property $\emph{locally antiperfect}$. It does not follow necessarily that the $\alpha_{\mu}$ equal zero, but if they do we say that the stratification is \emph{antiperfect} and for an antiperfect stratification:

\begin{equation}\label{shap}
	P_t^{\gau}(\A_{ss}) = P_t^{\gau}(\A) + \sum_{\mu \in I -0} t^{\lambda_{\mu}-1}P_t^{\gau}(\A_{\mu})
\end{equation}

In the case $n=2$, Ho-Liu prove that the stratification \emph{is} in fact antiperfect. They prove this by comparing the left hand side of (\ref{shap}), which was previously calculated in \cite{baird2008msf}, and showing that it matches the right hand side of (\ref{shap}).

Ho-Liu also conjecture that the stratification is antiperfect when $n=3$. In this paper we prove this conjecture.

\begin{thm}\label{thethm}
Let $E \rightarrow \Sigma$ be a rank three, complex Hermitian vector bundle over an compact, nonorientable 2-manifold without boundary. Then the Morse stratification of the Yang-Mills functional is $\gau$-equivariantly antiperfect. Consequently, we obtain Poincar\'e series
$$ P_t^{\gau}( \A^{\flat}(E)) = (1+t)^g[ (1+t^3)^g(1+t^5)^g + (1+t^2+t^4)(t^3 +2t^4+t^5)^g] P_t(B U(3))$$
when $\Sigma$ is the connected sum of $g+1$ copies of $\R P^2$.  
 \end{thm}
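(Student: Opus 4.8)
The plan is to follow the strategy Ho-Liu used for $n=2$: compute the right-hand side of (\ref{shap}) explicitly from known data about the strata, and separately compute $P_t^{\gau}(\A_{ss})$ by an independent route, then verify the two agree — which forces every connecting map $\alpha_\mu$ to vanish (equivalently, forces the long exact sequences to split), establishing antiperfection. Since $\A$ is contractible, $P_t^{\gau}(\A) = P_t(B\gau)$, and the equivariant cohomology of $\A$ and of each stratum $\A_\mu$ is computed via the classifying space of the gauge group; the needed Poincaré series $P_t^{\gau}(\A_\mu)$, the indices $\lambda_\mu$, and the index set $I(E)$ for nonorientable $\Sigma$ and rank up to $3$ are available from Ho-Liu \cite{ho2008msf}, \cite{hl2} and Ho-Liu-Ramras \cite{ho-liu-ramras}. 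The higher strata are indexed by Harder-Narasimhan type data, and each $\A_\mu$ is, up to homotopy and shifts, built out of products of gauge-equivariant cohomologies of lower-rank bundles over $\Sigma$ together with symmetric-product contributions; assembling $\sum_{\mu \neq 0} t^{\lambda_\mu - 1} P_t^{\gau}(\A_\mu)$ is then a bookkeeping exercise in these building blocks, ultimately expressed in terms of $P_t(BU(3))$ and the genus parameter $g$.

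For the independent computation of $P_t^{\gau}(\A_{ss})$, I would exploit the homotopy equivalence (\ref{steak}), so that it suffices to understand $H^*_{\gau}(\A^{\flat}(E))$, i.e., the rational cohomology of the moduli stack of flat $U(3)$-bundles over $\Sigma$. The most robust route here is to pull back to the orientation double cover $\pi : \cov \to \Sigma$: a flat bundle on $\Sigma$ pulls back to a flat bundle on $\cov$ equipped with a lift of the deck involution, so one obtains a description of $\A^{\flat}(E)_{h\gau}$ in terms of the $\Z/2$-equivariant (or "real") structure on the corresponding orientable moduli space, whose Yang-Mills/Atiyah-Bott theory is perfect and fully understood. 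Concretely, one realizes the representation variety / moduli stack over a nonorientable surface via a fixed-point or homotopy-fixed-point construction for an involution on the orientable case, and then computes its rational cohomology by a spectral sequence or by direct comparison of generators and relations, as was done for $U(2)$ in \cite{baird2008msf}. Alternatively, one can attempt a direct generator-and-relation analysis of $H^*_{\gau}(\A^{\flat})$ using the Atiyah-Bott-type generators restricted from the orientable cover.

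The main obstacle I expect is the independent computation of $P_t^{\gau}(\A_{ss})$: unlike the $n=2$ case, where \cite{baird2008msf} already supplied the answer, for $n=3$ no prior computation is available, so this is genuinely new work and is the crux of the paper. The difficulty is twofold: first, controlling the $\Z/2$-equivariant cohomology of the rank-$3$ orientable moduli data, where the involution acts nontrivially and one must track how it permutes the Atiyah-Bott generators and interacts with the torsion components of the Picard group (the choices of $c_1$, or equivalently the components indexed by the relevant torsion classes in $H^2(\Sigma)$); and second, handling the contributions of the non-semistable strata to the orientable computation so that the $\Z/2$-fixed-point data can be extracted cleanly. Once both sides are in hand, the final verification that they are equal — and hence that the stratification is antiperfect and Theorem \ref{thethm} holds — should reduce to a polynomial identity in $t$ and $g$ that can be checked directly.
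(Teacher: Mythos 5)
There is a genuine gap. Your plan is the Ho--Liu verification strategy: compute the right-hand side of (\ref{shap}) from the stratum data, compute $P_t^{\gau}(\A_{ss})$ independently, and observe that agreement forces antiperfection. That logic is sound (given local antiperfection, a dimension count does force the $\alpha_\mu$ to vanish), but it hinges entirely on the independent computation of $P_t^{\gau}(\A_{ss})$ for $n=3$, and you do not supply a workable method for it --- you correctly flag it as ``the crux'' and then gesture at a $\Z/2$-equivariant analysis of the orientable double-cover moduli or a generators-and-relations computation. Neither is known to go through for rank $3$ over a general nonorientable surface: the only prior rank-$3$ computations (\cite{baird2009mfb}) cover $\R P^2$ and the Klein bottle by entirely different methods, and the involution's action on the Atiyah--Bott generators in rank $3$ is precisely the kind of thing nobody has controlled. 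In effect your proposal assumes the answer in order to verify the answer; the Poincar\'e series in the theorem is supposed to be the \emph{output} of antiperfection, not an input to checking it.

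The paper avoids this circularity by proving antiperfection directly, with no prior knowledge of $P_t^{\gau}(\A_{ss})$. It recasts antiperfection as two conditions on the spectral sequence of the Morse filtration (Proposition \ref{ssver}): injectivity of $H^*_{\gau}(\A)\to H^*_{\gau}(\A_{ss})$, already known from \cite{baird2008msf}, and vanishing of all differentials out of the columns $p>0$. The differentials are killed by comparison: a smooth splitting $E\cong E_1^i\oplus E_2^{d-i}$ gives a filtered map $\phi$ from the stratified space $\B$ (a product of rank-$1$ and rank-$2$ pieces, whose stratification is antiperfect by the known rank-$2$ case) into $\A_{h\gau}$, and $\phi$ is injective on the $E_1$-page in columns $p>0$ because the relative normal bundle $N_p(A)/N_p(B)$ is the summand $H^1(\cov;Hom(\ti D_r,\ti D_0))$ of (\ref{hat3}), whose equivariant Euler class is not a zero divisor. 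An induction over pages then transports the vanishing of differentials from $\B$ to $\A_{h\gau}$, and the Poincar\'e series follows from (\ref{shap}). If you want to salvage your approach, you would need to either carry out the rank-$3$ double-cover computation in full (a substantial open problem in its own right) or switch to a direct argument of this comparison type.
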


We note that Theorem \ref{thethm} has already been proven when $\Sigma$ is the projective plane or the Klein bottle using a completely different method in \cite{baird2009mfb}.

It is natural to ask whether this remains true for bundles of rank greater than three. Regrettably the answer is no.

\begin{prop}\label{propfour}
For $E$ of rank greater than three, the Yang-Mills Morse stratification is not $\gau$-equivariantly perfect.
\end{prop}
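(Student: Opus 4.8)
The plan is a positivity argument: if the stratification were perfect, we would obtain an identity of Poincar\'e series whose right-hand side has a strictly negative coefficient, which is absurd.

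Suppose, for contradiction, that the Yang--Mills stratification of $\A = \A(E)$ is $\gau$-equivariantly perfect, where $\rk E = n \geq 4$. Then, as explained in the introduction, each long exact sequence (\ref{first}) splits into short exact sequences, and summing over all strata gives
$$ P_t^{\gau}(\A_{ss}) \;=\; P_t^{\gau}(\A) \;-\; \sum_{\mu \in I - 0} t^{\lambda_{\mu}}\, P_t^{\gau}(\A_{\mu}). $$
Since $\A$ is an affine space, $P_t^{\gau}(\A) = P_t(B\gau)$; and each higher stratum $\A_{\mu}$ is nonempty and $\gau$-equivariantly homotopy equivalent to a product of semistable strata of Hermitian bundles of strictly smaller rank, so $P_t^{\gau}(\A_{\mu})$ has nonnegative coefficients and constant term $\geq 1$. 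As $P_t^{\gau}(\A_{ss}) = P_t((\A_{ss})_{h\gau})$ also has nonnegative coefficients, it suffices to produce a negative coefficient on the right-hand side above.

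I would do this first for $\Sigma = \R P^{2}$, whose orientation double cover is $S^{2}$. Here $P_t(B\gau) = P_t(BU(n))$ by \cite{ho-liu-ramras} (equivalently, the factors $(1+t)^{g},(1+t^{3})^{g},\ldots$ in Theorem~\ref{thethm} disappear when $g = 0$), so $P_t(B\gau)$ vanishes in every odd degree. In contrast to the orientable case, the nonorientable Yang--Mills stratification has strata of \emph{odd} index $\lambda_{\mu}$ --- exactly the parity feature responsible for the shift $t^{\lambda_{\mu}-1}$ in (\ref{shap}) --- and for $n \geq 4$ there is such a stratum $\A_{\mu_{0}}$ (for instance the one obtained by splitting off a maximal-slope sub-line-bundle, whose index is computed $\Z/2$-equivariantly from Riemann--Roch on $S^{2}$ as in \cite{ho2008msf, ho-liu-ramras}). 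Reading off the coefficient of $t^{\lambda_{\mu_{0}}}$ in the displayed identity then gives
$$ 0 \;\le\; \bigl[t^{\lambda_{\mu_{0}}}\bigr]P_t^{\gau}(\A_{ss}) \;=\; \bigl[t^{\lambda_{\mu_{0}}}\bigr]P_t(B\gau) \;-\; \sum_{\lambda_{\mu}\le \lambda_{\mu_{0}}} \bigl[t^{\lambda_{\mu_{0}}-\lambda_{\mu}}\bigr]P_t^{\gau}(\A_{\mu}) \;\le\; 0 - 1 \;<\; 0, $$
where the $\mu = \mu_{0}$ term contributes $[t^{0}]P_t^{\gau}(\A_{\mu_{0}}) \geq 1$ and all other terms are nonnegative. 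This contradiction shows the stratification is not perfect over $\R P^{2}$. For a general compact nonorientable $\Sigma$ one argues similarly, now balancing the relevant coefficient of $P_t(B\gau) = (1+t)^{g}(1+t^{3})^{g}\cdots(1+t^{2n-1})^{g}P_t(BU(n))$ against the contributions of the strata of lowest index and checking that these low-degree coefficients still cannot absorb the subtraction.

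The substantive point, and the place I expect the real work, is the index computation: one must exhibit a stratum of a rank-$\geq 4$ bundle over a nonorientable surface whose normal bundle has odd real rank, via the index formulas of \cite{ho2008msf} and \cite{ho-liu-ramras} on the orientation double cover, the odd parity coming from a genuinely real (non-complex) summand of $N_{\mu}$. One should also confirm that the auxiliary facts used --- contractibility of $\A$, the formula for $P_t(B\gau)$, and the product description of the higher strata --- hold for every bundle $E$ under consideration, including those with nontrivial first Stiefel--Whitney class.
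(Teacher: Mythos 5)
You have proved the statement as literally printed, but not the statement the paper actually establishes. The word ``perfect'' in Proposition \ref{propfour} is a typo for ``antiperfect'': the paper's own proof opens with ``Suppose for the sake of contradiction, that the Morse stratification is $\gau$-equivariantly antiperfect,'' and the surrounding discussion (``It is natural to ask whether this [antiperfection] remains true for bundles of rank greater than three'') confirms that the intended claim is the failure of identity (\ref{shap}) for $n\geq 4$. That is the substantive point, because it shows the method of Theorem \ref{thethm} cannot extend to higher rank. By contrast, failure of \emph{perfection} over a nonorientable surface is not special to rank $\geq 4$: your own parity argument, applied to the nonempty rank-$2$ stratum $(r,-r)$ over $\R P^2$ (index $2r-1$, odd), already disproves perfection for $n=2$, where Ho--Liu prove the stratification \emph{is} antiperfect. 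So your positivity argument, while essentially sound for $\R P^2$ (the strata $(d,0,\dots,0,-d)$ are nonempty with odd index $\lambda_d=2d(n-1)-(2n-3)$ when $g=0$), does not engage with the real difficulty: under the antiperfection hypothesis the strata contributions in (\ref{shap}) are \emph{added}, not subtracted, so there is no negative coefficient to exploit.

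The paper instead shows the right-hand side of (\ref{shap}) is too large. It multiplies by $\prod_{i=1}^n(1-t^{2i})$ and lets $t\to 1$: Borel localization to the $T$-fixed points of $Hom(\pi,U(n))$ gives $\lim_{t\to 1}P_t^{\gau}(\A_{ss})\prod_{i=1}^n(1-t^{2i})=2^{(g+1)n-1}$, so the left side of (\ref{shap}) contributes $2^{(g+1)n-1}-2^{gn}$, while the strata $(d,0,\dots,0,-d)$ alone already contribute $n\,2^{(g+1)n-3}$, which exceeds this exactly when $n\geq 4$. Some counting input of this kind is unavoidable, since the sign structure of (\ref{shap}) kills any coefficientwise nonnegativity argument. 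Note also that even for the literal ``not perfect'' claim, your treatment of $g\geq 1$ is only a sketch: there $P_t(B\gau)=\prod_{i=1}^n(1+t^{2i-1})^{g}\,P_t(BU(n))$ has nonzero odd-degree coefficients, so the parity trick does not apply directly and the promised ``balancing'' of low-degree coefficients is not carried out (the same $t\to 1$ limit comparison would close that gap).
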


The holonomy of a flat connection determines a homomorphism from the fundamental group $\pi_1(\Sigma)$ to the unitary group of a fibre. This map produces a well known isomorphism of homotopy quotients
\begin{equation}\label{tehhe}
	Hom(\pi_1(\Sigma), U(n))_{hU(n)} \cong \coprod_{c_1(E)\text{ is torsion}} (\A^{\flat}(E))_{h\gau(E)}
\end{equation}
where the action on the left is by conjugation, and the union on the right is over isomorphism types of $\C^n$-bundles over $\Sigma$ admitting flat connections. A conjecture from the author's thesis \cite{bairdthesis} predicts that the homotopy quotient on the left of (\ref{tehhe}) is equivariantly formal. In other words, if $X = Hom(\pi_1(\Sigma), U(n))$, then 
$$ H_{U(n)}^*(X) \stackrel{?}{\cong} H^*(X) \otimes H^*(BU(n))$$
as modules over $H^*(BU(n))$. It is easily shown using the Eilenberg-Moore spectral sequence that equivariant formality is equivalent to $H^*_{U(n)}(X)$ being a free module over $H^*(BU(n))$. 
This conjecture is known to hold for all $\Sigma$ when $n=1,2$ \cite{baird2008msf}; for all $n$ when $\Sigma = \R P^2$\cite{baird2009mfb}; and for $n=3$ when $\Sigma$ is the Klein bottle \cite{baird2009mfb}. The Poincar\'e series computed in Theorem \ref{thethm} provides strong evidence for this conjecture in the case $n=3$, but we have so far been unsuccessful in proving this case. In \S \ref{module} we use the Morse stratification to prove the following partial result:

\begin{thm}\label{thm2}
For $\Sigma$ a compact nonorientable surface without boundary, the equivariant cohomology $H^*_{U(3)}( Hom(\pi_1(\Sigma), U(3)))$ is a torsion-free module over $H^*(BU(3))$.
\end{thm}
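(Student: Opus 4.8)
The plan is to exploit the Morse stratification (\ref{utnos}) together with the long exact sequences (\ref{first}), using the fact that the stratification is locally antiperfect (Ho-Liu), i.e. each $\beta_\mu = 0$. Since the homotopy quotient $Hom(\pi_1(\Sigma),U(3))_{hU(3)}$ decomposes by (\ref{tehhe}) as a disjoint union over the relevant bundles $E$ of the spaces $(\A^{\flat}(E))_{h\gau(E)} \simeq (\A_{ss}(E))_{h\gau(E)}$, it suffices to prove that $H^*_{\gau}(\A_{ss}(E))$ is torsion-free over $H^*(BU(3))$ for each rank-$3$ bundle $E$ with $c_1(E)$ torsion. I would proceed by downward induction on the strata: working inside $\A_{\leq\mu}$, I will show that each $H^*_{\gau}(\A_{\leq\mu})$ is a torsion-free $H^*(BU(3))$-module, then pass to the direct limit to conclude the same for $\A_{ss} = \A_{\leq 0}$ (the semistable stratum is the \emph{bottom} of the filtration, so more precisely one filters by $\A_{\leq\mu}$ from above and extracts $\A_{ss}$ as a quotient/retract of the filtration data).

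The key algebraic input is the following: because $\beta_\mu = 0$, the long exact sequence of the pair breaks, via the diagram (\ref{first}) and the Thom isomorphism, into short exact sequences
\begin{equation}\label{thm2ses}
0 \to H^{*-\lambda_\mu}_{\gau}(N_\mu) \xrightarrow{\ \cdot\, 0\ } H^*_{\gau}(\A_{\leq\mu}) \to H^*_{\gau}(\A_{<\mu}) \to 0
\end{equation}
exactly when the connecting map into $H^{*-\lambda_\mu+1}_{\gau}(N_\mu) \cong H^{*-\lambda_\mu+1}_{\gau}(\A_\mu)$ is \emph{onto}; in general one gets a four-term exact sequence relating $H^*_{\gau}(\A_{\leq\mu})$, $H^*_{\gau}(\A_{<\mu})$ and (a shift of) $H^*_{\gau}(\A_\mu)$. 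The strata $\A_\mu$ are themselves classified (following Ho-Liu-Ramras) in terms of moduli spaces of flat connections on bundles of lower rank over the orientation double cover, so by induction on the rank one knows their equivariant cohomology is built from that of smaller unitary classifying spaces and of representation varieties for which Theorem \ref{thm2} is already known ($n=1,2$). The strategy is therefore: (i) show each $H^{*}_{\gau}(\A_\mu)$ is torsion-free over $H^*(BU(3))$ using the explicit description of $\A_\mu$ and the known $n\le 2$ cases; (ii) use the short exact sequences (\ref{thm2ses}), or the relevant four-term sequences, to propagate torsion-freeness up the filtration, invoking the standard fact that an extension of a torsion-free module by a torsion-free submodule is torsion-free; (iii) take the limit over $\mu$.

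The main obstacle is step (ii): torsion-freeness is \emph{not} automatically preserved under the connecting homomorphism of the long exact sequence unless one controls its cokernel, and a priori the sequence (\ref{thm2ses}) need not be short exact — that is precisely the content of antiperfection, which here we are trying to prove only at the level of torsion-freeness rather than of Poincar\'e series. The way around this is to observe that the image of the connecting map $H^*_{\gau}(\A_{<\mu}) \to H^{*-\lambda_\mu+1}_{\gau}(N_\mu)$ is a submodule of a torsion-free module and hence torsion-free, so that $H^*_{\gau}(\A_{\leq\mu})$ sits in a short exact sequence $0 \to H^{*-\lambda_\mu}_{\gau}(N_\mu) \to H^*_{\gau}(\A_{\leq\mu}) \to \ker(\text{connecting}) \to 0$ with both outer terms torsion-free; this is enough to conclude torsion-freeness of the middle term without needing the stratification to split. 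The only genuine work is then to set up the induction on rank carefully so that the normal bundle data $\A_\mu$ and $N_\mu$ really do have torsion-free equivariant cohomology, which reduces cleanly to the cases $n=1,2$ already in the literature together with K\"unneth.
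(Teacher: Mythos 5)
Your approach has two fatal problems, one at the level of the inputs and one at the level of the key algebraic step.

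First, your step (i) is false: the nonsemistable strata are not torsion-free over $H^*(BU(3))$ --- they are pure torsion. By \S\ref{rnk3case}, $(\A_\mu)_{h\gau} \simeq (S^1)^{3g}\times BU(1)^2$ for $\mu=(r,0,-r)$ with $r>0$, and the $H^*(BU(3))$-module structure factors through the restriction $H^*(BU(3)) \to H^*(BU(1)^2)$. Since $H^*(BU(1)^2)$ has Krull dimension $2$ while $H^*(BU(3))$ is a polynomial ring in three variables, this ring map has nonzero kernel, and every element of $H^*_{\gau}(\A_\mu)$ is annihilated by it. This is the heart of the matter: $H^*_{\gau}(\A_{ss})$ is assembled (via the stratification) from the free module $H^*_{\gau}(\A)$ of Remark \ref{commentst} and shifted copies of these purely torsion modules, and the content of the theorem is that no torsion survives the assembly. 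An argument that feeds the strata in as torsion-free modules cannot see this.

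Second, the induction cannot be run in either direction. Since $\A_{ss}=\A_{\leq 0}$ is the open bottom stratum, an upward induction on $\A_{\leq\mu}$ has $\A_{ss}$ as its base case, which is circular. Going downward from $\A$, the long exact sequence presents $H^*_{\gau}(\A_{<\mu})$ as an extension of a submodule of $H^{*+1-\lambda_\mu}_{\gau}(N_\mu)$ by the \emph{quotient} $H^*_{\gau}(\A_{\leq\mu})/\im(\alpha_\mu)$, and quotients of torsion-free modules need not be torsion-free; to fix this one needs $\alpha_\mu=0$, i.e.\ full antiperfection (Theorem \ref{thethm}), which does not follow from $\beta_\mu=0$ alone. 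Your proposed short exact sequence $0\to H^{*-\lambda_\mu}_{\gau}(N_\mu)\to H^*_{\gau}(\A_{\leq\mu})\to\ker(\delta)\to 0$ is also incorrect as written: the subobject supplied by the long exact sequence is $\im(\alpha_\mu)=\mathrm{coker}(\delta)$, a quotient of the Thom-shifted group rather than the group itself (and in the antiperfect situation it is zero, so the sequence degenerates and the information flows the wrong way). The paper's proof takes a genuinely different route: it reduces to showing that $H^*_{\gau}(\A,\A_{ss})$ is torsion-free, and does so by injecting it --- using the full antiperfection theorem through Corollary \ref{ghro} together with the Euler-class injectivity of Lemma \ref{cuurry} --- into $H^*(\B,\B_{ss})$, which is a \emph{free} $H^*(BU(3))$-module by the rank-$2$ equivariant formality result and the splitting argument of Lemma \ref{scoe}. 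The comparison space $\B$ and the rank-$2$ freeness statement are precisely the inputs your proposal lacks.
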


\ignore{Theorem \ref{thm2} implies that for $T \subset U(3)$ a maximal torus with Weyl group $W$, the fixed point localization map

$$ H_{U(3)}^*(Hom(\pi_1(\Sigma), U(3))) \rightarrow H_{T}( Hom(\pi_1(\Sigma), U(3))^T)^W$$
is injective, so one might try to }

The strategy behind the proofs of both theorems is to exploit the known results in rank $2$. A rank $3$ bundle $E$ can be decomposed into a sum of a rank $1$ and rank $2$ bundle in two topologically distinct ways. This induces a map of stratified spaces that can be used to understand the boundary maps between strata.

\ignore{\textbf{Notation:}
...}

\textbf{Acknowledgements:}
I want to thank Nan Kuo Ho, Frances Kirwan, Melissa Liu and Dan Ramras for helpful discussions, and Dan Ramras for commenting on an earlier draft. This work was supported by an NSERC postdoctoral fellowship.

\section{Equivariant spaces}
We will make repeated use of the equivariant cohomology functor on the category of equivariant spaces, and so we review this here.

The category of \emph{equivariant spaces} consists of objects $(K, X, \rho)$ where $K$ is a topological group, $X$ is a topological space and $\rho: K \times X \rightarrow X$ is a continuous action, and morphisms $(h,\phi): (K,X,\rho) \rightarrow (K',X', \rho')$ where $h: K \rightarrow K'$ is a continuous group homomorphism, $\phi:X\rightarrow X'$ is a continuous map and $ \rho'\circ (h\times \phi) = \phi \circ \rho$. We will usually omit the action $\rho$ from notation when there is little risk of confusion.

Taking homotopy quotients  defines a covariant functor $(K,X) \mapsto X_{hK}$ from equivariant spaces to the homotopy category of topological spaces, and then applying cohomology determines a contravariant functor $(K,X) \mapsto H_K^*(X)$ to the category of graded commutative rings.

When $(K,X)$ forms a principal $K$-bundle, the homotopy quotient $X_{hK}$ is homotopy equivalent to the orbit space $X/K$. More generally, if $H \unlhd K$ is a normal subgroup, for which $(H,X)$ forms a principal bundle, then $X/H$ inherits a $K/H$-action and $X_{hK}$ is homotopy equivalent to $(X/H)_{h(K/H)}$.

For any equivariant space $(K,X)$, there is a unique morphism to the trivial $K$-space $(K,pt)$. The induced homomorphism $\phi: H_K^*(pt) \rightarrow H_K^*(X)$ naturally makes $H^*_K(X)$ into a module over $H_K^*(pt) = H^*(BK)$. A morphism $(K,X) \rightarrow (K', X')$ induces a commutative diagram of ring homomorphisms

\begin{equation}\begin{CD}
	\xymatrix{ H_K^*(X) & H_{K'}^*(X') \ar[l]_{\phi^*}  \\
	           H^*(BK) \ar[u] & H^*(BK')\ar[l] \ar[u]}
\end{CD}\end{equation}
which allows us to consider $\phi^*$ as a morphism of $H^*(BK')$ modules. 

If $K$ is a connected compact group and $H \subset K$ is a subgroup containing a maximal torus, then the induced map $H^*(BK) \rightarrow H^*(BH)$ is an injective, free extension of $H^*(BK)$.
\ignore{
When $K = U(1)^n$ is a torus, the cohomology ring $ H^*(BU(1)^n)$ is a cohomology ring $\C[x_1,...,x_n]$, where the generators $x_i$ have degree 2. The generators $x_i$ determine a basis for the dual of the Lie algebra $(\lie{u}(1)^n)^* \cong H^2(BU(1)^n)$. A homomorphism $\phi: U(1)^n \rightarrow U(1)^m$, induces a ring homomorphism $\phi^*: H^*(BU(1)^m) \rightarrow H^*(BU(1)^n)$ which is determined by the tangent map at the identity $(d_{\id}\phi)^*: \lie{u}(1)^m \cong H^2(BU(1)^m) \rightarrow \lie{u}(1)^n \cong H^2(BU(1)^n)$.

When $K = U(n)$, the inclusion $U(1)^n \hookrightarrow U(n)$ as a maximal torus induces an injective map $H^*(BU(n)) \rightarrow H^*(BU(1)^n)$ and identifies $H^*(BU(n))$ with the symmetric polynomials $\C[x_1,...,x_n]^{S_n}$. Under this inclusion, we have an isomorphism of free $H^*(BU(n))$-modules
\begin{equation}
	\C[x_1,...,x_n] \cong H^*( Fl(\C^n))\otimes H^*(BU(n))
\end{equation}
where $H^*(Fl(\C^n))$ is the cohomology of the space of complete flags in $\C^n$.
}

\section{Morse stratification for orientable $\Sigma$}

We summarize the relevant material from \S 7 of Atiyah-Bott \cite{ab2}.

Let $\Sigma$ be a oriented 2-manifold without boundary with $E \rightarrow \Sigma$ a Hermitian $\C^n$-bundle. A choice of complex structure on $\Sigma$ produces a homeomophism

\begin{equation}\label{hol}
 \A(E) \cong \CC(E)
\end{equation}
where $\CC(E)$ is the space of holomorphic structures on $E$. The isomorphism (\ref{hol}) sends a connection $\theta$ with covariant derivative $d_{\theta}$ to the holomorphic structure with $\dbar$-operator $\dbar_{\theta} = d_{\theta}^{0,1}$. The Yang-Mills Morse stratification $$ \A = \coprod_{\mu \in I} \A_{\mu}$$admits a holomorphic description, with strata indexed by the set $I=I(E)$ of Harder-Narasimhan types of holomorphic structures on $E$, as we now explain.

The slope of a complex vector bundle $ E \rightarrow \Sigma$, denoted $\slope(E)$, is defined to be the degree of $E$ divided by the rank of $E$, i.e. $$\slope(E) = \deg(E)/\rk(E). $$ A holomorphic structure $\dbar$ on $E$ is called \emph{semistable} if all holomorphic subbundles $F \subset E$ satisfy $ \slope(F) \leq \slope(E)$. Harder and Narasimhan \cite{hn} show that any holomorphic bundle possesses a canonical filtration by holomorphic subbundles $E_1 \subset E_2 \subset ... \subset E_k$ such that the quotients $D_i = E_i/E_{i-1}$ are semistable and 
$$ \slope(D_1) > \slope(D_2) > ... > \slope(D_k).$$ 
The Harder-Narasimhan type or HN-type of $E$ is an $n$-tuple of rational numbers $\mu(E) \in \Q^n$ encoding the ranks and slopes of the subquotients $D_i$. That is,  $\mu = (\mu_1,...,\mu_n)$, with the first $\rk(D_1)$ entries equal to $\slope(D_1)$, the next $\rk(D_2)$ entries equal to $\slope(D_2)$, and so on. The Yang-Mills Morse stratification of $\A(E)$ is indexed by the set of possible HN-types for $E$ and $$ \A(E)_{\mu} = \{ \theta \in \A(E)| \mu(E,\dbar_{\theta})= \mu\}$$
The semistable stratum will also be labelled $\A(E)_{ss}$.

For a given type $\mu$, a choice of $C^{\infty}$-splitting of $E = D_1 \oplus ... \oplus D_k$ determines an injective map 

\begin{equation}\label{loch}
	\prod_{i=1}^k\A(D_i)_{ss} \hookrightarrow \A(E)_{\mu}
\end{equation}
which induces a homotopy equivalence of homotopy quotients 

\begin{equation}\label{loch2}
\prod_{i=1}^k(\A(D_i)_{ss})_{h\gau(E_i)} \sim (\A(E)_{\mu})_{h\gau(E)}.
\end{equation}
This enables us to describe the higher strata of $\A(E)$ in terms of the semistable strata of lower rank bundles.

Each stratum $\A_{\mu} \subset \A$ is a finite codimension submanifold with orientable normal bundle $N_{\mu} \rightarrow \A_{\mu}$. If a connection $\theta$ lies in the image of (\ref{loch}) then $(E, \dbar_{\theta})$ decomposes into a sum of holomorphic bundles $ D_1 \oplus ... \oplus D_n$ and 
\begin{equation}\label{lfd}
N(E)_{\mu, \theta} \cong \bigoplus_{i<j} H^1(\Sigma, Hom(D_i,D_j)).
\end{equation}
This equation is derived by recalling the infinitesimal deformations of a holomorphic structure on $E$ are classified by $H^1(\Sigma, End(E))$ and then deducing that the right hand of (\ref{lfd}) classifies those deformations that disrupt the Harder-Narasimhan filtration. The complex rank can be computed using Riemann-Roch and is given by the formula

\begin{equation}
\rk_{\C} N(E)_{\mu} = \sum_{\mu_i>\mu_j} (\mu_i -\mu_j + (g-1)).
\end{equation}

\section{Morse stratification for nonorientable surfaces}
We summarize the relevant material from Ho-Liu \cite{ho2008msf}, \cite{hl2}, and Ho-Liu-Ramras \cite{ho-liu-ramras}. 

Let $\Sigma$ be a nonorientable compact surface without boundary (i.e. a connected sum of copies of $\R P^2$) and let $\pi: \ti{\Sigma} \rightarrow \Sigma$ denote the orientation double cover with deck transformation $\tau \in Aut(\ti{\Sigma})$. A choice of conformal structure on $\Sigma$ lifts to determine a complex structure on $\ti{\Sigma}$ for which $\tau$ is antiholomorphic.
Let $E \rightarrow \Sigma$ be a Hermitian $\C^n$-bundle. The pullback bundle $ \pcov = \pi^*E$ has vanishing Chern class and thus is trivial. The deck transformation $\tau$ induces an antiholomorphic involution of $\A(\ti{E})$, and pulling back connections induces an isomorphism 
\begin{equation}\label{lang}
\pi^*: \A(E) \cong \A(\ti{E})^{\tau} \subset \A(\pcov).
\end{equation}
between $\A(E)$ and the fixed point set of the involution. 
We identify $\A(E)$ with its image $\A(\ti{E})^{\tau}$, which is a real subspace of $\A(\ti{E})$. Because $\tau$ leaves both the metric and Yang-Mills functional invariant on $\A(\ti{E})$, the Yang-Mills functional produces a Morse stratification of $\A(E)$ with index set $I(E) \subset I(\pcov)$ and strata $$\A(E)_{\mu} = \A(\pcov)_{\mu} \cap \A(E).$$ 
A priori $\A(E)_{\mu}$ might be disconnected, but this turns out not to be the case. Reversing orientation changes degrees by a minus sign, so $\A(E)_{\mu}$ can only be nonempty if $\mu_i = -\mu_{n-i}$ for all $i=1,...,n$.

Choose an HN-splitting $\ti{D}_1\oplus ... \oplus \ti{D_k} \cong \ti{E}$. Then as a $C^{\infty}$-bundle $$E \cong E_0 \oplus \bigoplus_{\slope(\ti{D}_i)> 0} \pi_*\ti{D}_i $$
where $E_0$ is a bundle over $\Sigma$ with rank equal to the summand $\ti{D}_i$ of zero slope and $\pi_*\ti{D}_i$ denotes the pushforward of $\ti{D}_i$ by $\pi$.
This determines a homotopy equivalence of homotopy quotients

\begin{equation}\label{jeff}
  (\A(E_0)_{ss})_{h\gau(E_0)} \times \prod_{\slope(\ti{D}_i)>0} (\A(\ti{D}_i)_{ss})_{h\gau(\ti{D}_i)}  \stackrel{\sim}{\rightarrow} (\A(E)_{\mu})_{h\gau(E)}.
\end{equation}
Equation (\ref{jeff}) follows easily from Prop. 7.3 in \cite{hl2}, and can also be proved in similar fashion to Atiyah and Bott's proof of (\ref{loch2}). Notice that (\ref{jeff}) allows us to describe higher strata $\A(E)_{\mu}$ in terms of lower rank semistable strata over both $\Sigma$ and $\ti{\Sigma}$.

The subspace $\A(E)_{\mu} \subset \A(E)$ is a submanifold with orientable normal bundle $N(E)_{\mu} \rightarrow \A(E)$. The bundle $N(E)_{\mu}$ is equal to the fixed points of the induced antiholomorphic involution $\tau$ on the pullback of $N(\pcov)_{\mu}$ 

$$ N(E)_{\mu} = (N(\pcov)_{\mu}|_{\A(E)})^{\tau}. $$
consequently, the real rank of $N(E)_{\mu}$ equals the complex rank of $N(\pcov)_{\mu}$
\begin{equation}\label{index}
\lambda_{\mu}:= \rk_{\R}(N(E)_{\mu}) = \sum_{\mu_i>\mu_j} (\mu_i -\mu_j + (g-1)).
\end{equation}
For a connection $\theta$ lying in the image of (\ref{jeff}), the fibre $N(\pcov)_{\mu,\theta}$ decomposes as in (\ref{lfd}) and the involution $\tau$ sends summand $H^1(\cov; Hom(\ti{D}_i,\ti{D}_j))$ to $H^1(\cov; Hom(\ti{D}_{k-j}, \ti{D}_{k-i}))$. The summands of $N_{\mu}$ thus fall into two types. When $i \neq k-j$, $\tau$ interchanges two summands of $N(\ti{E})_{\mu,\theta}$, and

\begin{equation}\label{tip1}
[H^1(\Sigma, Hom(\ti{D}_i,\ti{D}_j)) \oplus H^1(\Sigma, Hom(\ti{D}_{k-j},\ti{D}_{k-i}))]^{\tau} \cong H^1(\Sigma, Hom(\ti{D}_{i}, \ti{D}_{j}))
\end{equation}
which has invertible Euler class, by the usual Atiyah-Bott argument. When $i = k-j$, $\tau$ preserves the summand of $N(\ti{E})_{\mu}$, producing 
\begin{equation}\label{tip2}
(H^1(\Sigma, Hom(\ti{D}_i,\ti{D}_{j})))^{\tau}
\end{equation}
which by \cite{ho2008msf} has vanishing Euler class whenever $\ti{D}_i$ has rank one.

\subsection{Rank 2}\label{shead}

Let $\Sigma$ equal a connected sum of $g+1$ copies of $\R P^2$ and $E\rightarrow \Sigma$ a rank $2$ Hermitian bundle with Chern class $c_1(E) = d \text{  (mod } 2)$ in $H^2(\Sigma) \cong \Z/2\Z$. The index set is

$$I(E) = \{(0,0)\} \cup \{ (r,-r)| r > 0 \text{ and }r = d +g+1 \text{  (mod } 2) \}.$$
and the stratum $\A(E)_{(r,-r)}$ satisfies the homotopy equivalence $$ (\A(E)_{(r,-r)})_{h\gau(E)} \sim(\A(\ti{D}_r)_{ss})_{h\gau(\ti{D}_r)}$$ when it is nonempty, where $\ti{D}_r$ is a line bundle over $\ti{\Sigma}$ of degree $r$. Because $\ti{D}_r$ is rank one, all connections are semistable so $ \A(\ti{D}_r) = \A(\ti{D}_r)_{ss} \sim \A^{\flat}(\ti{D}_r)$ and we obtain
$$ (\A(E)_{(r,-r)})_{h\gau(E)} \sim (S^1)^{2g}\times BU(1).$$
For a choice of complex structure on $\ti{\Sigma}$, the factor $(S^1)^{2g}$ may be identified with $Pic_r(\Sigma)$, the moduli space of holomorphic line bundles of degree $r$ on $\Sigma$.

For connection $\theta \in \A(E)_{(r,-r)}$ lying in the image of (\ref{jeff}), the pullback to $\ti{\Sigma}$ decomposes holomorphically as $$(\pi^*E; \dbar_{\pi^*\theta}) = \ti{D}_r \oplus \ti{D}_{-r}$$ where $\ti{D}_r, \ti{D}_{-r}$ are holomorphic line bundles of degree $r,-r$ respectively. The normal bundle at $\theta$ satisfies $N(\pcov)_{(r,-r),\theta} = H^1(\cov ; Hom(\ti{D}_r, \ti{D}_{-r}))$, so

\begin{equation}\label{hat2}
N(E)_{(r,-r),\theta} = H^1(\cov ; Hom(\ti{D}_r, \ti{D}_{-r}))^{\tau},	
\end{equation} 
and $N(E)_{(r,-r)}$ has trivial $\gau$-equivariant Euler class and real rank $2r+g-1$.

\subsection{Rank 3}\label{rnk3case}

For a Hermitian $\C^3$-bundle $E\rightarrow \Sigma$, the index set is

\begin{equation}
I(E) = \{ (r,0,-r)| r=0,1,2,...\}
\end{equation}
and the stratum $\A(E)_{(r,0,-r)}$ satisfies a homotopy equivalence $$ (\A(E)_{(r,0,-r)})_{h\gau(E)} \sim \A(\ti{D}_r)_{\gau(\ti{D}_r)} \times \A(E_0)_{\gau(E_0)}$$
where $\ti{D}_r$ is a line bundle of degree $r$ over $\ti{\Sigma}$ and $E_0$ is a line bundle over $\Sigma$. Because both $\ti{D}_r$ and $E_0$ are line bundles, we get a simpler description by restricting to flat connections:

$$ (\A(E)_{(r,0,-r)})_{h\gau(E)} \sim (S^1)^{2g} \times BU(1)\times(S^1)^{g} \times BU(1) \cong (S^1)^{3g}\times BU(1)^2. $$
For connection $\theta \in \A(E)_{(r,0,-r)}$ lying in the image of (\ref{jeff}), the holomorphic pullback bundle decomposes as $$(\pi^*E; \dbar_{\pi^*\theta}) = \ti{D}_r \oplus \ti{D}_0 \oplus \ti{D}_{-r}$$ where $\ti{D}_r,\ti{D}_0, \ti{D}_{-r}$ are holomorphic line bundles of degree $r,0,-r$ respectively. The normal bundle at $\theta$ decomposes as
\begin{equation}\label{hat3}
N(E)_{(r,0,-r),\theta} \cong H^1(\cov ; Hom(\ti{D}_r, \ti{D}_{-r}))^{\tau}\oplus H^1(\cov;Hom(\ti{D}_r,\ti{D}_0)).	
\end{equation}
The first summand has vanishing equivariant Euler class and real rank $2r+g-1$, while the second summand has real rank $2r+2(g-1)$ and an equivariant Euler class which is not a zero divisor.

\section{Spectral sequence of an ordered stratification}

Let $M$ denote a manifold, possibly infinite dimensional. Suppose that $M$ decomposes as a disjoint union $ M = \coprod_{r=0}^{\infty} M_r,$ where $M_r \subset M$ is a submanifold with finite codimension $\lambda_r$ and with orientable normal bundle $N_r \rightarrow M_r$. Suppose further that the closure $\bar{M}_r \subset \bigcup_{s\geq r} M_p$ and that $\lambda_r \geq r +c$ for some constant $c$. 

The subspaces $M_{\leq p} = \cup_{r=0}^p M_r$ form a topological filtration of $M$ and determines a spectral sequence with $E_1$ page, 
\begin{equation}\label{E1}
E_1^{p,q} = H^{p+q}(M_{\leq p},M_{< p}) \cong H^{p+q-\lambda_p}(M_{p})
\end{equation}
converging to $H^{p+q}(M)$. The second isomorphism in (\ref{E1}) is the Thom isomorphism for the normal bundle of $M_p$. The $r$th (co)boundary operator sends $$d_r^{p,q}: E_r^{p,q} \rightarrow E_r^{p+r, q-r+1}.$$
and the spectral sequences satisfies $E_{r+1}^{p,q} = \ker(d_r^{p,q})/\im(d_r^{p-r, q+r-1})$.

It is easy to see that a stratification is perfect if and only if the spectral sequence collapses at $E_1$, i.e. all boundary maps $d_r^{p,q} =0$. Antiperfect stratifications can also be characterized using properties of the spectral sequence.

\begin{prop}\label{ssver}
Suppose for each $r$, that $H^*(M_r)$ is finite dimensional in each degree. 
The stratification $M = \coprod_{r=0}^{\infty}M_r$ is antiperfect if and only if\\
\\
(1) $i^*: H^*(M) \rightarrow H^*(M_0)$ is injective.\\
(2) $d_r^{p,q}$ is zero when $p,r>0$.\\
\\
where $i: M_0 \hookrightarrow M$ is inclusion.
\end{prop}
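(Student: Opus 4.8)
Write $F_p := M_{\leq p}$, so $F_{-1} = \emptyset$, $M = \bigcup_{p \geq 0} F_p$, and $E_1^{p,q} = H^{p+q}(F_p, F_{p-1}) \cong H^{p+q-\lambda_p}(M_p)$ by the Thom isomorphism; let $\alpha_p \colon H^*(F_p, F_{p-1}) \to H^*(F_p)$ be the map from the long exact sequence of the pair, so that antiperfection means $\alpha_p = 0$ for every $p \geq 1$. The hypotheses $\lambda_p \geq p+c$ and finiteness of each $H^*(M_r)$ guarantee that all groups occurring below are finite dimensional, that $E_1^{p,q} = 0$ for $p < 0$, that for each fixed $n$ one has $H^n(F_p) = H^n(M)$ and $H^{n+1}(F_p, F_{p-1}) = 0$ once $p$ is large, that $\dim H^n(M) = \sum_p \dim E_\infty^{p,n-p}$, and that $i^*$ is injective if and only if $E_\infty^{p,q} = 0$ for all $p \geq 1$ (both express vanishing of the bottom step of the induced filtration on $H^*(M)$). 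The plan is to reduce the assertion to a family of numerical identities and then to verify these, in one direction from the spectral sequence of $M$ and in the other from the finite spectral sequences of the $F_p$.

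For the reduction I would use that $\alpha_p = 0$ in degree $n+1$ is equivalent to surjectivity of the connecting homomorphism $\delta_p \colon H^n(F_{p-1}) \to H^{n+1}(F_p, F_{p-1})$, and that counting dimensions along the exact sequence of $(F_p, F_{p-1})$ at $H^n(F_p)$ and at $H^n(F_{p-1})$ gives
\[
 \dim H^n(F_p) - \dim H^n(F_{p-1}) = \dim \im \alpha_p - \dim \im \delta_p .
\]
Since $\dim \im \delta_p \leq \dim H^{n+1}(F_p, F_{p-1}) = \dim H^{n+1-\lambda_p}(M_p)$, the stratification is antiperfect if and only if $\dim H^n(F_p) = \dim H^n(F_{p-1}) - \dim H^{n+1-\lambda_p}(M_p)$ for all $p \geq 1$ and $n$; telescoping (the case $p = 0$ being trivial), this is equivalent to the family of identities
\[
 \dim H^n(F_p) = \dim H^n(M_0) - \sum_{r=1}^{p} \dim H^{n+1-\lambda_r}(M_r), \qquad p \geq 0,\ n \in \Z ,
\]
which I will call $(\star)$. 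It then suffices to prove that $(\star)$ is equivalent to the conjunction of (1) and (2).

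To see that $(\star)$ implies (1) and (2), I would work in the spectral sequence of $M$. Letting $p \to \infty$ in $(\star)$ gives $\dim H^n(M) = \dim H^n(M_0) - \sum_{r \geq 1} \dim H^{n+1-\lambda_r}(M_r)$. The column $p = 0$ receives no differentials, so $\dim E_\infty^{0,n} = \dim E_1^{0,n} - \sum_{r \geq 1} \dim \im d_r^{0,n}$; combined with $\dim \im d_r^{0,n} \leq \dim E_r^{r,n-r+1} \leq \dim E_1^{r,n-r+1}$, this forces $\dim E_\infty^{0,n} \geq \dim H^n(M) = \sum_p \dim E_\infty^{p,n-p}$, so all of these inequalities are equalities. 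Equality $\dim E_\infty^{0,n} = \dim H^n(M)$ forces $E_\infty^{p,n-p} = 0$ for $p \geq 1$, which is (1). Equality $\dim \im d_r^{0,n} = \dim E_r^{r,n-r+1} = \dim E_1^{r,n-r+1}$ for every $r \geq 1$ shows that each column $r \geq 1$ is undisturbed by the differentials of stages $1, \dots, r-1$ (so $d_s^{r,q} = 0$ for $1 \leq s \leq r-1$) and is then surjected onto by the incoming $d_r^{0,\bullet}$; since $\im d_r^{0,\bullet} \subseteq \ker d_r^{r,\bullet} \subseteq E_r^{r,\bullet}$, this forces moreover $d_r^{r,q} = 0$ and $E_{r+1}^{r,q} = 0$, whence $d_s^{r,q} = 0$ for $s > r$ as well. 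Thus $d_s^{p,q} = 0$ whenever $p, s \geq 1$, which is (2).

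For the converse I would observe that, by (2), the only differential meeting a column $p \geq 1$ of the spectral sequence of $M$ is the incoming $d_p^{0,\bullet}$, so $E_\infty^{p,\bullet} = E_1^{p,\bullet} / \im d_p^{0,\bullet}$; this vanishes for $p \geq 1$ by (1), hence $d_p^{0,\bullet}$ surjects onto $E_1^{p,\bullet}$. Now fix $p_0$ and consider the (length $p_0 + 1$) spectral sequence of the filtration $F_0 \subset \cdots \subset F_{p_0}$ of $F_{p_0}$, which converges to $H^*(F_{p_0})$. Since $F_{p_0} \hookrightarrow M$ is filtration preserving, with induced filtration eventually constant, naturality of spectral sequences identifies this one with the truncation at column $p_0$ of that of $M$: all its differentials in columns $\geq 1$ vanish, while its differentials $d_r^{0,\bullet}$ coincide with those of $M$ for $r \leq p_0$ and vanish for $r > p_0$. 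Consequently its $E_\infty$-page is concentrated in the column $p = 0$, with $\dim E_\infty^{0,n} = \dim E_1^{0,n} - \sum_{r=1}^{p_0} \dim E_1^{r,n-r+1}$, which upon passing to $H^*(F_{p_0})$ is exactly $(\star)$. The step I expect to be the main obstacle is this last naturality argument: verifying carefully that the finite spectral sequence of $F_{p_0}$ really is the column-$p_0$ truncation of that of $M$, so that in particular the surjectivity of the differentials $d_r^{0,\bullet}$ forced by (1) persists in it; everything else is a succession of dimension counts, legitimate by the finiteness hypothesis. One should also check at the outset that $\alpha_0 = \id$, so that antiperfection constrains only strata of positive index, matching the roles of $M_0$ and of the requirements ``$p > 0$'' and ``$r > 0$'' in the statement.
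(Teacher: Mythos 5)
Your proof is correct and follows essentially the same route as the paper's: reduce antiperfection to the Poincar\'e-series identity $P_t(M_0)=P_t(M)+\sum_{p>0}t^{\lambda_p-1}P_t(M_p)$ and then saturate the chain of dimension inequalities in the spectral sequence of $M$ to extract (1) and (2). The only divergence is your converse, which establishes the whole family $(\star)$ via the truncated spectral sequences of the $F_{p_0}$; the paper gets by with the single limiting identity ($p=\infty$), which already forces each stepwise inequality to be an equality, so the naturality/truncation step you flag as the main obstacle can in fact be avoided.
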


\begin{proof}
It follows easily from the definition that the stratification is antiperfect if and only if $P_t(M_0) = P_t(M) + \sum_{p>0}t^{\lambda_p -1}P_t(M_p)$. This is equivalent to $\dim( E_1^{0,*}) = \dim(E_{\infty}^{0,*}) + \sum_{p>0} \dim(E^{p, q-*}_1)$ for all *, which in turn is equivalent to:\\
(1') $d_r^{0,q}$ is surjective for all $r,q$.\\
(2) $d_r^{p,q}$ is zero when $p>0$.\\
so it remains to show that (1) and (2) is equivalent to (1') and (2).

Condition (1') implies that $E_{\infty}^{p,q} = 0$ for $p>0$ which is equivalent to (1). In the other direction, (1) implies that $E_{\infty}^{p,q} = 0$ for $p>0$, while (2) implies that $ E_{\infty}^{p,q} = \mathrm{coker} ( d_p^{0,p+q-1 })$, so (1) and (2) imply (1').
\end{proof}

\begin{cor}\label{ghro}
If $M = \coprod_{r=0}^{\infty}M_r$ is antiperfect, then the filtration of the pair $(M,M_0)$ by subpairs $( M_{\leq p}, M_0)$ induces a spectral sequence that collapses on page 1. Consequently, the filtration of $H^*(M,M_0)$ by kernels, $\ker( H^*(M,M_0) \rightarrow H^*(M_{\leq p}, M_0) )$ has associated grading satifying $$\gr^p(H^*(M,M_0)) \cong H^{*-{\lambda_{\mu}}}(M_p).$$
\end{cor}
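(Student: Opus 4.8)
The plan is to show that the spectral sequence associated to the filtration of the pair $(M, M_0)$ by subpairs $(M_{\leq p}, M_0)$ is essentially the ``truncated'' version of the spectral sequence of Proposition~\ref{ssver}, with the $p=0$ column removed, and then to read off the consequences. First I would identify the $E_1$-page: for $p \geq 1$ we have $H^{p+q}(M_{\leq p}, M_{\leq p-1}) \cong H^{p+q}(M_{\leq p}, M_{<p})$ (since $M_{<p} = M_{\leq p-1}$), which by the Thom isomorphism, exactly as in (\ref{E1}), is $H^{p+q-\lambda_p}(M_p)$; for $p = 0$ we get $H^{p+q}(M_0, M_0) = 0$. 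So the new spectral sequence has $E_1$-page equal to that of Proposition~\ref{ssver} in columns $p \geq 1$ and zero in column $p = 0$.

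Next I would compare differentials. The differentials $d_r^{p,q}$ of the new spectral sequence, for $p, p+r \geq 1$, agree with those of the old spectral sequence (both are induced by the connecting maps of the same triples of subspaces), since the only place the two filtrations differ is in whether the bottom piece is $M_0$ or the empty pair; for $r$-differentials with source in column $p \geq 1$ this makes no difference. Hypothesis (2) of Proposition~\ref{ssver}, which is available because $M = \coprod M_r$ is assumed antiperfect, says precisely that $d_r^{p,q} = 0$ for all $p > 0$ and all $r$. Hence every differential of the new spectral sequence vanishes and it collapses at $E_1$. This gives $E_\infty^{p,q} = E_1^{p,q} \cong H^{p+q-\lambda_p}(M_p)$ for $p \geq 1$ and $E_\infty^{0,q}=0$, and since the spectral sequence converges to $H^*(M, M_0)$ — note $H^*(M,M_0)$ is the correct target because the filtration exhausts $M$ and begins at $M_0$ — the associated graded of the kernel filtration $\gr^p(H^*(M,M_0)) = \ker(H^*(M,M_0)\to H^*(M_{\leq p-1},M_0))/\ker(H^*(M,M_0)\to H^*(M_{\leq p},M_0))$ is identified with $E_\infty^{p,*-p} \cong H^{*-\lambda_p}(M_p)$, which is the claimed formula (with the harmless notational shift between $M_p$ and $M_\mu$).

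The one genuinely delicate point is bookkeeping about the indexing conventions of the kernel filtration versus the decreasing filtration of the spectral sequence — making sure the ``$p$'' in $\gr^p$ lines up with the column index $p$ and that the convergence statement is literally about $H^*(M,M_0)$ rather than $H^*(M)$. I would handle this by noting that the inclusions $(M_{\leq p}, M_0) \hookrightarrow (M,M_0)$ induce, in the colimit, an isomorphism on cohomology in each degree (using the hypothesis $\lambda_r \geq r + c$, so that $M_{\leq p} \to M$ is highly connected relative to $M$ in a range growing with $p$, exactly as in the setup preceding (\ref{E1})), so that the kernel filtration is exhaustive and Hausdorff and the standard comparison of a filtered complex with its spectral sequence applies verbatim. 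No serious obstacle is expected beyond this; the content is entirely carried by part (2) of Proposition~\ref{ssver}.
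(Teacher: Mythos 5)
Your proposal is correct and follows essentially the same route as the paper: identify the $E_1$-page of the spectral sequence of $(M,M_0)$ with that of $M$ in columns $p>0$ (and zero in column $0$), then invoke condition (2) of Proposition \ref{ssver} to kill all differentials. The paper phrases the comparison via the morphism of spectral sequences induced by $(M,\emptyset)\to(M,M_0)$, but the content is identical to your direct identification of $E_1$-terms and differentials.
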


\begin{proof}
By construction the map of pairs $(M, \emptyset) \rightarrow (M, M_0)$, respects filtrations and induce a morphism of spectral sequences $\phi: E_*^{*,*}(M,M_0) \rightarrow E_*^{*,*}(M)$. On page 1, and column $p>0$, $\phi$ is an isomorphism $E_2^{p,*}(M,M_0) \cong E_2^{p,*}(M) \cong H^{*-\lambda_{\mu}}(M_p)$, while $E_2^{0,p}(M,M_0) = 0$. Property (2) from Proposition \ref{ssver} implies that the spectral sequence for $(M,M_0)$ collapses on page 1.
\end{proof}

\begin{rmk}\label{rm1}
It was proven in \cite{baird2008msf}, that for nonorientable $\Sigma$ and for all $n>0$ (indeed all compact connected Lie groups) the map $$H_{\gau}(\A) \rightarrow H_{\gau}(\A_{ss})$$ is an injection, establishing $(1)$ for the Yang-Mills stratification. In the next section, we will prove Theorem \ref{thethm} by establishing condition $(2)$.
\end{rmk}

\section{Antiperfection}\label{five}

Let $\Sigma$ be a connected sum of $g+1$ copies of $\R P^2$. The isomorphism classes of $\C^n$-bundles over $\Sigma$ are classified by their first Chern class $c_1(E) \in H^2(\Sigma) \cong \Z/2 \Z$ and we denote by $E_n^{i}$ a principal bundle with $c_1(E_n^{i}) = i \in \Z/2\Z$. Fix a rank 3 bundle $E = E^d_3$. Then $E$ decomposes into a Whitney sum $E_1^i \oplus E_2^{d-i}$ for $i =0$ or $1$.

Define $$\B^i := \A(E_1^i)_{h\gau(E_1^i)} \times \A(E_2^{d-i})_{h\gau(E_2^{d-i})}$$and $\B = \coprod_{i=0,1} \B^i$. Then $\B$ has a stratification with strata $$\B_{r} := \bigcup_{i=0,1} \A(E_1^i)_{h\gau(E_1^i)} \times (\A(E_2^{d-i})_{( r, -r)})_{h\gau(E_2^{d-i})}.$$ According to \S \ref{shead}, the stratum $\B_r$ nonempty for all $r \geq 0$, and connected for $r>0$.

\begin{lem}\label{wellknown}
The stratification $\B = \coprod_{r}\B_r$ is antiperfect.
\end{lem}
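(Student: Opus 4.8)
The plan is to reduce the statement to the rank-2 case, which is already known to be antiperfect by Ho-Liu. First I would observe that $\B = \coprod_{i=0,1}\B^i$ is a disjoint union of two pieces, each of which is a product $\A(E_1^i)_{h\gau(E_1^i)} \times \A(E_2^{d-i})_{h\gau(E_2^{d-i})}$, and that the stratification $\B = \coprod_r \B_r$ is the product of the trivial one-stratum stratification on the first factor with the Yang-Mills stratification on the second factor, which lives over the rank-2 bundle $E_2^{d-i}$. So $\B_r = \coprod_{i} \A(E_1^i)_{h\gau(E_1^i)} \times (\A(E_2^{d-i})_{(r,-r)})_{h\gau(E_2^{d-i})}$, and the normal bundle of $\B_r$ in $\B$ pulls back from the normal bundle $N(E_2^{d-i})_{(r,-r)}$ of the corresponding rank-2 stratum. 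In particular the index of $\B_r$ equals the index $2r+g-1$ of the rank-2 stratum, and the normal bundle is orientable.

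Next I would invoke Proposition \ref{ssver}: it suffices to check condition (1), injectivity of $H^*(\B) \to H^*(\B_0)$, and condition (2), vanishing of $d_r^{p,q}$ for $p,r>0$ in the spectral sequence of the ordered stratification. The key point is that, since $\A(E_1^i)$ is $\gau(E_1^i)$-equivariantly contractible onto its semistable (= flat) locus and carries no nontrivial stratification, the spectral sequence of $\B = \coprod \B_r$ is obtained from the spectral sequence of the rank-2 stratification $\A(E_2^{d-i}) = \coprod_r \A(E_2^{d-i})_{(r,-r)}$ by tensoring (over $\Q$) with $H^*(\A(E_1^i)_{h\gau(E_1^i)}) \cong H^*((S^1)^g \times BU(1))$ — a Künneth argument, valid because all cohomology groups in play are finite-dimensional in each degree and free. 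The differentials act only through the rank-2 factor. Hence $d_r^{p,q}$ vanishes for $p>0$ if and only if the corresponding differentials in the rank-2 spectral sequence do, and likewise injectivity of $H^*(\B)\to H^*(\B_0)$ follows from injectivity of $H^*_{\gau(E_2^{d-i})}(\A(E_2^{d-i})) \to H^*_{\gau(E_2^{d-i})}(\A(E_2^{d-i})_{ss})$. Both of these facts are exactly the content of the antiperfection of the rank-2 Yang-Mills stratification established by Ho-Liu \cite{ho2008msf}, \cite{hl2} (with condition (1) also following from Remark \ref{rm1}), together with Proposition \ref{ssver} applied in rank 2.

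The main obstacle — really the only place care is needed — is setting up the Künneth/product-of-stratifications argument cleanly: one must check that the topological filtration of $\B$ by $\B_{\leq p}$ is the product filtration (trivial on the $\A(E_1^i)$ factor), so that the $E_1$-page splits as a tensor product and the differentials respect this splitting. This uses that $H^*(\A(E_1^i)_{h\gau(E_1^i)})$ is a free graded $\Q$-module of finite type, so that tensoring is exact and commutes with taking cohomology of pairs; the orientability of the normal bundles of the $\B_r$ (needed for the Thom isomorphism on the $E_1$-page) follows from orientability of the normal bundles of the rank-2 strata, noted in \S \ref{shead}. Once the splitting is in place, antiperfection of $\B$ is immediate from antiperfection in rank 2 and Proposition \ref{ssver}.
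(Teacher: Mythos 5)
Your proof is correct and follows essentially the same route as the paper: reduce to each component $\B^i$, observe that the stratification is pulled back from the rank-2 Yang-Mills stratification on the second factor, and invoke Ho-Liu's rank-2 antiperfection. The paper states this in one line, leaving the product/K\"unneth step implicit, whereas you spell it out via Proposition \ref{ssver}; the content is the same.
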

\begin{proof}
It suffices to show that the restricted stratification on each component $\B^i$ is antiperfect. But $\B^i$ is the product $\A(E_1^i)_{h\gau(E_1^i)} \times \A(E_2^{d-i})_{\gau(E_2^{d-i})}$ with stratification pulled back from the Yang-Mills stratification of $\A(E_2^{d-i})_{\gau(E_2^{d-i})}$ which was shown to be antiperfect in \cite{ho2008msf}, so the result follows.
\end{proof}

Let $\A = \A(E)$ and $\gau = \gau(E)$. For each $i$, choose an isomorphism $E_1^i\oplus E_2^{d-i} \cong E$. This induces a map

\begin{equation}
\phi: \B \hookrightarrow \A_{h\gau}
\end{equation}
sending $ \B_r$ to $(\A_{(r,0,-r)})_{h\gau}$.

\begin{lem}\label{tift}
The map $\phi$ restricts to homotopy equivalences $$\B_r \sim (\A_{(r,0,-r)})_{\h \gau}$$for all $r>0$.
\end{lem}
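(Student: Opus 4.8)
The plan is to compare the two stratified descriptions of the higher stratum via equations (\ref{jeff}) and the rank-3 analysis in \S\ref{rnk3case}. Fix $r > 0$. On the one hand, unravelling the definition of $\B_r$, we have
$$ \B_r = \coprod_{i=0,1} \A(E_1^i)_{h\gau(E_1^i)} \times (\A(E_2^{d-i})_{(r,-r)})_{h\gau(E_2^{d-i})}, $$
and by \S\ref{shead} the factor $(\A(E_2^{d-i})_{(r,-r)})_{h\gau(E_2^{d-i})}$ is homotopy equivalent to $(\A(\ti D_r)_{ss})_{h\gau(\ti D_r)}$ for a degree-$r$ line bundle $\ti D_r$ over $\cov$, which is nonempty exactly when $r \equiv (d-i) + g + 1 \pmod 2$. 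Thus for each $r > 0$ exactly one value of $i \in \{0,1\}$ contributes, and for that $i$ the bundle $E_1^i$ plays the role of the rank-one summand $E_0$ appearing in (\ref{jeff}). On the other hand, by \S\ref{rnk3case} applied to $E = E_3^d$, we have $(\A_{(r,0,-r)})_{h\gau} \sim (\A(\ti D_r)_{ss})_{h\gau(\ti D_r)} \times (\A(E_0)_{ss})_{h\gau(E_0)}$ where $E_0$ is a line bundle over $\Sigma$; the degree constraint $\mu_i = -\mu_{n-i}$ together with the parity bookkeeping identifies this line bundle $E_0$ with $E_1^i$ for the same distinguished $i$. So both sides are homotopy equivalent to the same product, and the content of the lemma is that the specific map $\phi$ induced by the chosen splitting $E_1^i \oplus E_2^{d-i} \cong E$ realizes this equivalence.

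The key steps, in order, are as follows. First, I would recall that the homotopy equivalence (\ref{jeff}) is itself induced by a map of the shape (\ref{loch})--(\ref{loch2}): a $C^\infty$-splitting of $E$ compatible with the HN-type $(r,0,-r)$ determines the inclusion $\prod (\A(\cdot)_{ss}) \hookrightarrow \A_{(r,0,-r)}$, and passing to homotopy quotients gives the equivalence. Second, I would observe that the splitting $E \cong E_1^i \oplus E_2^{d-i}$ used to define $\phi$ is, after further splitting $E_2^{d-i}$ according to the rank-2 HN-type $(r,-r)$, precisely a $C^\infty$-splitting of the type required in the previous step: it refines $E$ into a line bundle over $\Sigma$ (namely $E_1^i$, the zero-slope piece $E_0$) plus the pushforward $\pi_*\ti D_r$ (the rank-two piece carrying the $(r,-r)$ data over $\cov$). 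Third, I would check that $\phi$, restricted to $\B_r$, agrees up to homotopy with the composite of (i) the rank-2 equivalence $\B_r \sim \coprod_i \A(E_1^i)_{h\gau} \times (\A(\ti D_r)_{ss})_{h\gau(\ti D_r)}$ from \S\ref{shead} and (ii) the equivalence (\ref{jeff}) for $E$ run backwards; since both are induced by the same underlying $C^\infty$-splitting, they are compatible, and $\phi|_{\B_r}$ is a homotopy equivalence.

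One subtlety I would address carefully is the matching of connected components and the parity/degree bookkeeping: $\B_r$ a priori has two pieces indexed by $i$, but for $r > 0$ only one is nonempty by the index-set description in \S\ref{shead}, and I must confirm that $\phi$ sends that piece to the (connected) stratum $(\A_{(r,0,-r)})_{h\gau}$ and does so by a map that is a bijection on components. A second point is that the homotopy equivalences (\ref{loch2}) and (\ref{jeff}) are stated as equivalences of homotopy quotients, so I should phrase everything at the level of homotopy quotients throughout and only invoke that a map of equivariant spaces inducing an equivalence on homotopy quotients does what we need. The main obstacle is the third step — verifying that the map $\phi$ induced by the \emph{global} rank-$(1{+}2)$ splitting genuinely coincides (up to $\gau$-equivariant homotopy) with the map built from the rank-$(1{+}1{+}1)$ HN-splitting used in (\ref{jeff}); this is essentially the statement that the two ways of assembling the normal-form data — split off the line bundle first, then split the rank-2 part over $\cov$, versus split all three line bundles at once over $\cov$ — are homotopic, which follows from the contractibility of the relevant spaces of splittings but needs to be spelled out. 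Once that identification is in hand, comparing (\ref{hat3}) with (\ref{hat2}) shows the normal bundles correspond as well, which is what makes $\phi$ useful for the boundary-map analysis in the proof of Theorem \ref{thethm}.
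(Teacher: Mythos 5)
Your proposal is correct and takes essentially the same route as the paper: the paper's proof is exactly the two-out-of-three argument you describe, noting that the composite $\A(E_1^i)_{h\gau(E_1^i)} \times \A(\ti{D}_r)_{h\gau(\ti{D}_r)} \rightarrow \B_r \stackrel{\phi}{\rightarrow} (\A_{(r,0,-r)})_{h\gau}$ is the equivalence (\ref{jeff}) while the first map is the rank-2 equivalence of \S\ref{shead}, so $\phi|_{\B_r}$ is forced to be an equivalence. The compatibility of the $(1{+}2)$ and $(1{+}1{+}1)$ splittings that you flag as the main obstacle is left implicit in the paper, and your component/parity bookkeeping matches the paper's remark that $\B_r$ is connected for $r>0$.
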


\begin{proof}
By (\ref{jeff}) we have a sequence of maps $$ \A(E_1^i)_{h\gau(E_1^i)} \times\A(\ti{D}_r)_{h\gau(\ti{D}_r)} \rightarrow \B_{r} \stackrel{\phi}{\rightarrow} (\A_{(r,0,-r)})_{h\gau} $$ for which the first map and the composed map are both homotopy equivalences. We infer that $\phi$ is also a homotopy equivalence.
\end{proof}

Define filtrations $\B_{\leq p} = \bigcup_{r=0}^p \B_{r}$, and $(\A_{h\gau})_{\leq p} = \bigcup_{r=0}^p (\A_{(p,0,-p)})_{h\gau}$. Then $\phi$ respects these filtrations and so induces a map between the corresponding spectral sequences. 

Let $E_r^{p,q}(B)$ denote the spectral sequences determined by the filtration of $\B$ and let $E_r^{p,q}(A)$ denote the spectral sequences determined by the filtration of $\A_{h\gau}$.

\begin{lem}\label{cuurry}
The filtered map $\phi$ induces a map of spectral sequences $\phi_r^{p,q}: E_r^{p,q}(A) \rightarrow E_r^{p,q}(B)$ for which $\phi_1^{p,q}$ is injective when $p>0$.
\end{lem}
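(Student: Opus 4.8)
The plan is to analyze $\phi_1^{p,q}$ directly via the Thom isomorphism identifications. By (\ref{E1}), the $E_1$-page in column $p$ is $E_1^{p,q}(A) \cong H^{p+q-\lambda_p}((\A_{(p,0,-p)})_{h\gau})$ and $E_1^{p,q}(B) \cong H^{p+q-\lambda_p}(\B_p)$, and since $\phi$ respects the filtrations and carries $\B_p$ to $(\A_{(p,0,-p)})_{h\gau}$, the map $\phi_1^{p,q}$ is, up to these Thom isomorphisms, just the pullback $\phi^*: H^*((\A_{(p,0,-p)})_{h\gau}) \to H^*(\B_p)$ on the stratum. (One should check that $\phi$ pulls the normal bundle $N_p$ of the stratum in $(\A_{h\gau})_{\leq p}$ back to the normal bundle of $\B_p$ in $\B_{\leq p}$, so that the Thom isomorphisms are genuinely intertwined; this follows because $\phi$ is defined by a bundle splitting $E_1^i \oplus E_2^{d-i} \cong E$ and the normal bundle description (\ref{hat3}) is functorial in the splitting.) So the lemma reduces to showing that $\phi^*$ is injective on the cohomology of each stratum $\B_p \to (\A_{(p,0,-p)})_{h\gau}$ for $p>0$.

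For $p>0$, Lemma \ref{tift} says $\phi$ restricts to a homotopy equivalence $\B_p \sim (\A_{(p,0,-p)})_{h\gau}$, so $\phi^*$ on the stratum is an \emph{isomorphism}, hence certainly injective, and the lemma follows immediately in those columns. The only subtlety is that the statement as written says "$p>0$", so strictly I only need the $p>0$ columns; but one should be mindful that the Thom-isomorphism identification of $\phi_1^{p,q}$ with a pullback on the stratum is clean precisely because $\phi$ maps $\B_p$ into the single stratum $(\A_{(p,0,-p)})_{h\gau}$ and maps $\B_{<p}$ into $(\A_{h\gau})_{<p}$, so it induces a map of pairs $(\B_{\leq p}, \B_{<p}) \to ((\A_{h\gau})_{\leq p}, (\A_{h\gau})_{<p})$ and hence, after excision, a map of Thom spaces covering the homotopy equivalence $\B_p \sim (\A_{(p,0,-p)})_{h\gau}$.

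The main obstacle, such as it is, is bookkeeping rather than depth: one must verify that the two Thom isomorphisms (source and target) really are compatible with $\phi$, i.e. that $\phi$ restricted to a tubular neighbourhood of $\B_p$ is a bundle map over the homotopy equivalence $\B_p \sim (\A_{(p,0,-p)})_{h\gau}$ covering an isomorphism of normal bundles (up to the homotopy equivalence). Given (\ref{jeff}), (\ref{hat3}) and Lemma \ref{tift}, this is a matter of tracing the construction: the first map in the composition in the proof of Lemma \ref{tift} identifies both $\B_p$ and $(\A_{(p,0,-p)})_{h\gau}$ with $\A(E_1^i)_{h\gau(E_1^i)} \times (\A(\ti D_r)_{ss})_{h\gau(\ti D_r)}$, and under this identification the normal bundles on both sides become the same bundle built from (\ref{hat3}). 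I would phrase the proof as: (i) $\phi$ induces a filtered map hence a map of spectral sequences; (ii) on $E_1$, column $p$, it is identified via Thom isomorphisms with $\phi^*$ on the $p$-th stratum; (iii) for $p>0$, Lemma \ref{tift} makes that an isomorphism, in particular injective.
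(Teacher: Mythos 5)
Your reduction of $\phi_1^{p,q}$ to the pullback $\phi^*$ on the open strata is where the argument breaks. You write both $E_1$-terms with the same shift $\lambda_p$, but the two codimensions are genuinely different: the normal bundle of $\B_p$ in $\B$ is only the rank-two piece $H^1(\cov;\Hom(\ti{D}_p,\ti{D}_{-p}))^{\tau}$ of (\ref{hat2}), of real rank $2p+g-1$, whereas the normal bundle of $(\A_{(p,0,-p)})_{h\gau}$ in $\A_{h\gau}$ is (\ref{hat3}), which carries the additional summand $H^1(\cov;\Hom(\ti{D}_p,\ti{D}_0))$ of rank $2p+2(g-1)$ coming from deformations that mix the rank-one and rank-two factors of the splitting $E_1^i\oplus E_2^{d-i}$. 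So the parenthetical claim that ``$\phi$ pulls the normal bundle of the stratum back to the normal bundle of $\B_p$'' is false, the two Thom isomorphisms shift degrees by different amounts, and $\phi_1^{p,q}$ cannot be identified with $\phi^*$ on the stratum. Indeed it cannot be an isomorphism: after identifying both strata with $\mZ_p$ via Lemma \ref{tift}, the source and target of $\phi_1^{p,q}$ are $H^{p+q-\lambda_p(a)}(\mZ_p)$ and $H^{p+q-\lambda_p(b)}(\mZ_p)$ with $\lambda_p(a)-\lambda_p(b)=2p+2(g-1)>0$ in general, so their Poincar\'e series already disagree column by column.

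The missing idea is the excess normal bundle. Tracing $\phi$ through excision and the two Thom isomorphisms, the induced map on $E_1$ in column $p$ becomes, on $H^*(\mZ_p)$, cup product with the $\gau$-equivariant Euler class of the quotient $N_p(A)|_{\mZ_p}/N_p(B)|_{\mZ_p}$, which by comparing (\ref{hat2}) and (\ref{hat3}) is exactly the summand $H^1(\cov;\Hom(\ti{D}_p,\ti{D}_0))$. That summand is of type (\ref{tip1}), so its Euler class is not a zero divisor by the Atiyah--Bott argument, and injectivity of $\phi_1^{p,q}$ for $p>0$ follows. This is the substance of the lemma; without the Euler-class step the statement has no content, and your version would (incorrectly) prove the much stronger claim that the two spectral sequences agree above column zero.
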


\begin{proof}	

The map $\phi_r^{p,q}$ fits into a commutative diagram
\begin{equation}\begin{CD}
\xymatrix{E_1^{p,q}(A)  \ar[r]^{\phi_r^{p,q}} & E_1^{p,q}(B)\\
H^{p+q}((\A_{h\gau})_{\leq p}, (\A_{h\gau})_{< p}  ) \ar[r] \ar[u]^{=}& H^{p+q}( \B_{\leq p} , \B_{<p}) \ar[u]^{=}\\
H^{p+q - \lambda_p(a)}((\A_{(p,0,-p)})_{h\gau}) \ar[d]_{\cong} \ar[r] \ar[u]^{\cong} & H^{p+q - \lambda_p(b)}( \B_{p}) \ar[u]^{\cong} \ar[d]_{\cong} \\
H^{p+q - \lambda_p(a)}( \mZ_{p})  \ar[r]^{\alpha}  & H^{p+q - \lambda_p(b)}( \mZ_{p})  
}
\end{CD}\end{equation}
where $\mZ_p$ is $\A(E_1^i)_{h\gau(E_1^i)} \times\A(\ti{D}_p)_{h\gau(\ti{D}_p)}$ an includes into $(\A_{(p,0,-p)})_{h\gau}$ and $\B_p$ as described in the proof of Lemma \ref{tift}, and $\lambda_p(a)$ and $\lambda_p(b)$ are the codimension of the $p$th stratum in $\A_{h\gau}$ and $\B$ respectively. So we are reduced to proving that $\alpha$ is an injection. But $\alpha$ is simply cup product by the equivariant Euler class of the quotient bundle $ N_p(A)|_{\mZ_{p}} /N_p(B)|_{\mZ_p}$. Comparing (\ref{hat2}) and (\ref{hat3}), we see that $ N_p(A)|_{\mZ_{p}} /N_p(B)|_{\mZ_p}$ is isomorphic to the second summand of (\ref{hat3}) which is not a zero divisor, so $\alpha$ is injective.
\end{proof}

\begin{proof}[Proof of Theorem \ref{thethm}]
Denote the boundary operators of the spectral sequences $E_r^{p,q}(A)$ and $E_r^{p,q}(B)$ by $d_r^{p,q}(A)$ and $d_r^{p,q}(B)$ respectively. By Proposition \ref{ssver} and Remark \ref{rm1}, to prove antiperfection, we only need to prove that $d_r^{p,q}(A)=0$ when $p>0$. By Proposition \ref{ssver} and Lemma \ref{wellknown}, we know $d_r^{p,q}(B) = 0$ for $p>0$. By the commutative diagram 

\begin{equation}\begin{CD}\label{injcom}
\xymatrix{ E_r^{p,q}(A) \ar[rr]^{d_r^{p,q}(A)} \ar[d]^{\phi^{p,q}_r} && E_r^{p+r, q-r+1}(A) \ar[d]^{\phi^{p+r,q-r+1}_r}\\
E_r^{p,q}(B) \ar[rr]^{0}  && E_r^{p+r, q-r+1}(B)\\
}	
\end{CD}\end{equation}
it is sufficient to prove that $\phi_r^{p,q}$ is injective for $p>0$. This was proven for $r=1$ in Lemma \ref{cuurry}. Now assume inductively that $\phi_r^{p,q}$ is injective for all $p>0$, all $q$ and a fixed $r$. Then on the $r+1$ page we have

\begin{equation}\label{fixeh}
	\phi_{r+1}^{p,q}: E_{r+1}^{p,q}(A) = \frac{\ker( d_r^{p,q}(A))}{\im( d_r^{p-r,q+r-1}(A))} \rightarrow E_{r+1}^{p,q}(B) = \frac{\ker( d_r^{p,q}(B))}{\im( d_r^{p-r,q+r-1}(B))}
\end{equation}
When $p>0$ we know that $d_r^{p,q}(A) = d_r^{p,q}(B)=0$, so the numerators equal $E_r^{p,q}$. If $p \neq r$ then the denominators are zero and we have $\phi_{r+1}^{p,q} \cong \phi_{r}^{p,q} $ is injective by induction. If $p = r$ then both sides of (\ref{fixeh}) are zero by condition $(1)$ of Proposition \ref{ssver}, which has already been established for both $A$ and $B$.

Applying formula (\ref{shap}), we obtain $P_t^{\gau}(\A^{\flat}) = P_t^{\gau}(\A_{ss}) = P_t^{\gau}(\A) + \sum_{r=0}^{\infty} t^{\lambda_r-1}P_t^{\gau}(\A_{(r,0,-r)})$. All the quantities on the right of this equality are known (see Remark \ref{commentst} and \S \ref{rnk3case}) so the computation of $P_t^{\gau}(\A^{\flat})$ follows directly. 
\end{proof}

\begin{rmk}\label{commentst}
We used at the end of the preceding proof the result from \cite{baird2008msf} \S 4.5, that for $E \rightarrow \Sigma$ a Hermitian $\C^n$-bundle over a connected sum of $g+1$ copies of $\R P^2$, we have $H^*_{\gau(E)}(\A(E)) \cong H^*_{U(n)}( U(n)^g) \cong H^*(U(n)^g)\otimes H^*(BU(n))$ as $H^*(BU(n))$ modules, where $U(n)$ acts diagonally by conjugation.
\end{rmk}

\section{Module structure}\label{module}
In this section, we prove that the action of $H^*(BU(3))$ on $H^*_{U(3)}(Hom(\pi_1(\Sigma), U(3)))$ is torsion free. Throughout, let $\Sigma$ denote a connected sum of $g+1$ copies of $\R P^2$.

\subsection{Homomorphisms of $\pi_1(\Sigma)$ and connections}

It was stated in the introduction that we have an isomorphism of rings 

\begin{equation}\label{teghe}
H_{U(3)}^*(Hom(\pi_1(\Sigma), U(3))) \cong \bigoplus_{c_1(E) \text{ is torsion} } H_{\gau(E)}^*(\A(E)_{ss}).
\end{equation}
This isomorphism is induced by a morphism of equivariant spaces as we now explain.

Choose a base point $x\in \Sigma$, we define $\gau_{0} = \gau_0(E) \subset \gau(E)$ to be the group of gauge transformations of $E$, which leave the fibre $E_x$ fixed. Then $\gau_{0}$ fits into a short exact sequences $$1 \rightarrow \gau_{0} \rightarrow \gau \rightarrow U(E_x) \rightarrow 1$$ 
where $U(E_x) \cong U(n)$ is the group of unitary transformations of the fibre. Combined with the quotient map $\A \rightarrow \A/\gau_{0}$, this induces a morphism $ \A_{h \gau} \rightarrow (\A/ \gau_0)_{hU(n)}$ by functoriality. Because the based gauge group acts freely on $\A$, this is an isomorphism $ \A_{h \gau} \cong (\A/\gau_{0})_{hU(n)} $ and similarly for all equivariant subspaces of $\A$.

Holonomy defines a map $\A^{\flat}(E) \rightarrow Hom(\pi_1(\Sigma,x), U(E_x))$ which descends to a homeomorphism 
\begin{equation}\label{gropuid}
\coprod_{c_1(E) \text{ is torsion}} \A^{\flat}(E)/\gau_0(E) \cong Hom(\pi, U(n))
\end{equation}
where $\pi = \pi_1(\Sigma, x)$ and we have identified $U(E_x) \cong U(n)$. The isomorphisms (\ref{tehhe}) and (\ref{teghe}) follow.

\subsection{Rank 2}

Before considering the rank 3 case, it will be important to understand what happens in rank $2$. Let $E = E_2 \rightarrow \Sigma$ be a rank 2 Hermitian bundle. 

\begin{lem}\label{scoe}
For $E \rightarrow \Sigma$ of rank 2, the cohomology of the pair $H_{\gau(E)}^*(\A, \A_{ss}) \cong H_{U(2)}^*(\A/\gau_0, \A_{ss}/\gau_0)$ is a free module over $H^*(BU(2))$. 
\end{lem}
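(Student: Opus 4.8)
The plan is to analyze the Yang-Mills Morse stratification of $\A = \A(E_2^d)$ directly, using the spectral sequence machinery of Section 5 together with the antiperfection result of Ho-Liu in rank $2$. Recall from \S\ref{shead} that the strata are $\A_{(r,-r)}$ for $r>0$ (plus the semistable stratum $\A_{ss}$), each connected, with $(\A_{(r,-r)})_{h\gau} \sim \A(\ti{D}_r)_{h\gau(\ti{D}_r)} \sim (S^1)^{2g}\times BU(1)$, and with normal bundle of rank $2r+g-1$ having vanishing equivariant Euler class. Since Ho-Liu proved this stratification is antiperfect, Corollary \ref{ghro} applies: the filtration of the pair $(\A_{h\gau}, (\A_{ss})_{h\gau})$ by the subpairs $((\A_{\leq p})_{h\gau}, (\A_{ss})_{h\gau})$ induces a spectral sequence that collapses on page $1$, and the associated graded of $H^*_{\gau}(\A, \A_{ss})$ in filtration degree $p$ is $H^{*-\lambda_p}_{\gau}(\A_{(p,-p)})$.

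Next I would note that each $H^{*-\lambda_p}_{\gau}(\A_{(p,-p)}) \cong H^{*-\lambda_p}((S^1)^{2g}) \otimes H^*(BU(1))$ is a \emph{free} module over $H^*(BU(1))$, hence over... — and here is the subtlety — the relevant ring is $H^*(BU(2))$, not $H^*(BU(1))$. The $H^*(BU(2))$-module structure on $H^*_{\gau}(\A, \A_{ss}) \cong H^*_{U(2)}(\A/\gau_0, \A_{ss}/\gau_0)$ comes from the classifying map of the full gauge group, and on the stratum $\A_{(r,-r)}$ the structure group reduces (up to homotopy) to the subgroup $\gau(\ti{D}_r)$, whose classifying space maps to $BU(2)$ through $B U(1)$ embedded via $Pic_r(\Sigma) \times BU(1)$. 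So I must check that the composite $H^*(BU(2)) \to H^*_{\gau}(\A_{(r,-r)})$ makes the latter a free $H^*(BU(2))$-module. This follows because the map on classifying spaces factors as $(\A(E)_{(r,-r)})_{h\gau} \to \A(E)_{h\gau} \to BU(2)$ and, by Remark \ref{commentst}, $H^*_{\gau}(\A(E)) \cong H^*(U(2)^g)\otimes H^*(BU(2))$ is free over $H^*(BU(2))$; the homotopy equivalence (\ref{jeff}) identifies the stratum's cohomology compatibly, and one checks the induced $H^*(BU(2))$-module is still free (the stratum sees only a $U(1)\times U(1)$ worth of the classifying data, but the relevant Chern classes of $BU(2)$ still generate a polynomial subring over which the cohomology is free — this is where I would do a small explicit computation with the torus inclusion $U(1)^2 \hookrightarrow U(2)$).

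With freeness of each associated graded piece in hand, I would conclude by the standard argument that an iterated extension of free modules over a polynomial ring, arising as the associated graded of a finite (in each degree) filtration, assembles to a free module: concretely, $P_t$-additivity gives $P_t^{BU(2)}\big(H^*_{\gau}(\A, \A_{ss})\big) = \sum_{p>0} t^{\lambda_p} P_t^{BU(2)}\big(H^*_{\gau}(\A_{(p,-p)})\big)$, and since each term is a free $H^*(BU(2))$-module and the extension problems are between free modules over the (graded) polynomial ring $H^*(BU(2))$ — where the relevant Ext groups vanish in the appropriate degrees because free modules are projective — the filtration splits and $H^*_{\gau}(\A, \A_{ss})$ is free. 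The main obstacle I anticipate is the second step: verifying that the $H^*(BU(2))$-module structure (as opposed to the more obvious $H^*(BU(1))$-structure coming from each rank-one factor) on the strata is free, since a priori $H^*(BU(2)) \to H^*(BU(1))$ is not surjective and one must track carefully how the two line-bundle factors $\ti{D}_r$ and $\ti{D}_{-r}$ combine — this is precisely the kind of calculation the commented-out torus-localization paragraph in Section 2 is setting up, and I would lean on the fact (stated there) that $H^*(BU(2)) \to H^*(BU(1)^2)$ is an injective free extension.
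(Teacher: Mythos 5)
There is a genuine gap at the step you yourself flagged as the main obstacle: the associated graded pieces $H^{*-\lambda_p}_{\gau}(\A_{(p,-p)})$ are \emph{not} free over $H^*(BU(2))$ --- they are torsion modules. The point is that the stratum $(\A_{(r,-r)})_{h\gau}$ retracts onto $\A(\ti{D}_r)_{h\gau(\ti{D}_r)}$ with a \emph{single} gauge group $\gau(\ti{D}_r)=Map(\ti{\Sigma},U(1))$, which maps to $U(2)=U(E_x)$ by $g\mapsto \mathrm{diag}(g(\ti{x}_1),g(\ti{x}_2))$ at the two preimages of the basepoint. Since $\ti{\Sigma}$ is connected, the two evaluation homomorphisms are homotopic, so on classifying spaces the map factors up to homotopy through the \emph{diagonal} $BU(1)\to BU(1)^2\to BU(2)$, not through the full torus. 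Hence $H^*(BU(2))=\Q[c_1,c_2]$ acts on $H^*_{\gau}(\A_{(r,-r)})\cong H^*((S^1)^{2g})\otimes\Q[u]$ via $c_1\mapsto 2u$, $c_2\mapsto u^2$, and the nonzero element $c_1^2-4c_2$ annihilates everything. (A cruder check: the Poincar\'e series $(1+t)^{2g}/(1-t^2)$ cannot be written as $p(t)/\bigl((1-t^2)(1-t^4)\bigr)$ with $p$ having nonnegative coefficients, so freeness is impossible.) This is exactly the contrast with the orientable Atiyah--Bott case, where the stratum carries an independent $U(1)\times U(1)$ and its cohomology is free over $H^*(BU(2))$; here the two summands $\ti{D}_r,\ti{D}_{-r}$ are exchanged by $\tau$ and cannot be decoupled. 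With torsion graded pieces, the concluding ``iterated extension of free modules'' argument has nothing to build on; indeed, since the lemma is true, the filtration must be badly non-split (compare $\Q[x]$ with its $x$-adic filtration, which is free with all graded quotients torsion), so no splitting argument along these lines can work.

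The paper avoids the stratification entirely for this lemma. It uses that $H^*_{\gau}(\A_{ss})\cong H^*(Hom(\pi_1(\Sigma),U(2)))\otimes H^*(BU(2))$ is free by the equivariant formality result of \cite{baird2008msf}, and then splits the short exact sequence $0\to H^*_{\gau}(\A)\to H^*_{\gau}(\A_{ss})\to H^*_{\gau}(\A,\A_{ss})\to 0$ by exhibiting the image of $H^*_{\gau}(\A)$ as the fixed subspace of a $\Z/2\Z$-automorphism (tensoring by a flat line bundle). This realizes $H^*_{\gau}(\A,\A_{ss})$ as a direct summand of a free module, hence projective, hence free over the graded polynomial ring $H^*(BU(2))$. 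Your spectral-sequence setup is not wasted --- the paper uses precisely Corollary \ref{ghro} plus injectivity into a free module to get the weaker \emph{torsion-freeness} statement in rank $3$ --- but it cannot yield freeness in rank $2$.
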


\begin{proof}
It was proven in \cite{baird2008msf} that equivariant formality holds for $H^*( Hom(\pi_1(\Sigma), U(2))) \cong H^*_{\gau}(\A_{ss})$, so it is in particular free over $H^*(BU(2))$. Consider now the short exact sequence 
\begin{equation}\label{sss}
0 \rightarrow H_{\gau}^*( \A) \rightarrow H_{\gau}^*(\A_{ss}) \rightarrow H^*_{\gau}(\A, \A_{ss})\rightarrow 0.
\end{equation} 
In the analogous situation where $U(2)$ is replaced by $SU(2)$, (\ref{sss}) was shown to split in \cite{baird2008msf} (see proof of Theorem 1.1), the argument being that the image of $H^*_{\gau}(\A)$ coincides with the fixed points of a $\Z/2\Z$-automorphism of $H^*_{\gau}(\A_{ss})$ (which is defined by tensoring by a flat line bundle). The same argument works here, and we deduce that $H^*_{\gau}(\A, \A_{ss})$ is projective and hence free over $H^*(BU(2))$.
\end{proof}

We deduce as a simple consequence,
\begin{cor}
The ring $H^*(\B, \B_{ss})$ is a free module over $H^*(BU(3))$.
\end{cor}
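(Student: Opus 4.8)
The plan is to reduce the Corollary to Lemma \ref{scoe} via the product decomposition of $\B$. Recall $\B = \coprod_{i=0,1} \B^i$ where $\B^i = \A(E_1^i)_{h\gau(E_1^i)} \times \A(E_2^{d-i})_{h\gau(E_2^{d-i})}$, and the semistable stratum $\B_{ss} = \B_0$ decomposes compatibly as $\coprod_{i=0,1} \A(E_1^i)_{h\gau(E_1^i)} \times (\A(E_2^{d-i})_{ss})_{h\gau(E_2^{d-i})}$. Since $E_1^i$ is a line bundle, $\A(E_1^i)$ is semistable in its entirety, so the only stratification occurring on $\B^i$ is that pulled back from the rank-2 factor. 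Consequently, for each $i$ the pair $(\B^i, \B^i_{ss})$ is homotopy equivalent to the product of $\A(E_1^i)_{h\gau(E_1^i)}$ with the pair $\bigl(\A(E_2^{d-i})_{h\gau(E_2^{d-i})}, (\A(E_2^{d-i})_{ss})_{h\gau(E_2^{d-i})}\bigr)$, and $H^*(\B, \B_{ss}) \cong \bigoplus_{i=0,1} H^*(\B^i, \B^i_{ss})$.

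Next I would invoke the Künneth formula (with rational coefficients, over which we always work). For each $i$,
$$
H^*(\B^i, \B^i_{ss}) \cong H^*\bigl(\A(E_1^i)_{h\gau(E_1^i)}\bigr) \otimes H^*_{\gau(E_2^{d-i})}\bigl(\A(E_2^{d-i}), \A(E_2^{d-i})_{ss}\bigr).
$$
By Lemma \ref{scoe}, the second tensor factor is a free module over $H^*(BU(2))$; since $H^*(BU(2))$ is itself a free (in particular flat) module over $H^*(BU(3))$ under the natural inclusion $H^*(BU(3)) \hookrightarrow H^*(BU(2))$ induced by a maximal-torus-containing subgroup (as recalled in \S 2), the second factor is free over $H^*(BU(3))$. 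The map $H^*(BU(3)) \to H^*(\B^i,\B^i_{ss})$ realizing the module structure comes from the morphism of equivariant spaces $\B^i \to (\mathrm{pt})$ with group $\gau(E)$, which factors through $\gau(E_1^i) \times \gau(E_2^{d-i})$; chasing the diagram of \S 2 shows this module structure is the one induced by $H^*(BU(3)) \to H^*(BU(2)) \to H^*(\B^i,\B^i_{ss})$, i.e. the free structure just described, tensored over $\Q$ with the (finite-dimensional, hence free $\Q$-vector-space) factor $H^*(\A(E_1^i)_{h\gau(E_1^i)})$. A tensor product over $\Q$ of a free $H^*(BU(3))$-module with a $\Q$-vector space is again free over $H^*(BU(3))$, so each $H^*(\B^i,\B^i_{ss})$ is free, and a finite direct sum of free modules is free.

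The main point requiring care — and the only genuine obstacle — is verifying that the $H^*(BU(3))$-module structure on $H^*(\B^i, \B^i_{ss})$ is compatible with the product decomposition in the way claimed, i.e. that the classifying map $\B^i \to BU(3)$ is homotopic to the composite of the projection-and-classify maps into $BU(1) \times BU(2) \to BU(3)$. This is exactly the statement that the Whitney sum $E_1^i \oplus E_2^{d-i} \cong E$ induces, at the level of homotopy quotients and classifying spaces, the block-diagonal inclusion $U(1) \times U(2) \hookrightarrow U(3)$; it follows from the functoriality of homotopy quotients set up in \S 2 together with the fact that $H^*(BU(3)) \to H^*(B(U(1) \times U(2))) = H^*(BU(1)) \otimes H^*(BU(2))$ is a free extension. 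Once this compatibility is in hand, everything else is a routine application of Künneth and flatness, and the Corollary follows.
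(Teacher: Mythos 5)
Your overall strategy --- decompose $\B$ into the two components $\B^i$, apply the K\"unneth formula, feed in Lemma \ref{scoe}, and finish with a free-extension argument --- is exactly the paper's proof. But your middle paragraph contains a genuine error. You assert that $H^*(BU(2))$ is free over $H^*(BU(3))$ ``under the natural inclusion $H^*(BU(3)) \hookrightarrow H^*(BU(2))$ induced by a maximal-torus-containing subgroup.'' There is no such inclusion: $U(2)$ has rank $2$ and does \emph{not} contain a maximal torus of $U(3)$, and the ring map $H^*(BU(3)) = \Q[c_1,c_2,c_3] \rightarrow H^*(BU(2)) = \Q[c_1,c_2]$ induced by $U(2) \subset U(3)$ kills $c_3$, so it is not injective and $H^*(BU(2))$ is not free (nor even torsion-free) over $H^*(BU(3))$ via it. For the same reason the $H^*(BU(3))$-module structure on $H^*(\B^i,\B^i_{ss})$ does not factor through the rank-$2$ K\"unneth factor alone: under $H^*(BU(3)) \rightarrow H^*(BU(1))\otimes H^*(BU(2))$ the class $c_1$ maps to $c_1\otimes 1 + 1\otimes c_1$, which acts nontrivially on both tensor factors.

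The repair --- which you essentially state in your final paragraph --- is to work over all of $H^*(B(U(1)\times U(2))) = H^*(BU(1))\otimes H^*(BU(2))$. The first K\"unneth factor is $H^*(U(1)^g)\otimes H^*(BU(1))$, free over $H^*(BU(1))$, and the second is free over $H^*(BU(2))$ by Lemma \ref{scoe}, so their tensor product is free over $H^*(BU(1))\otimes H^*(BU(2))$. Since $U(1)\times U(2)$ \emph{does} contain a maximal torus of $U(3)$, the map $H^*(BU(3)) \rightarrow H^*(BU(1))\otimes H^*(BU(2))$ is a free extension, and any free module over the larger ring is free over $H^*(BU(3))$. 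Routed this way your argument coincides with the paper's; as written, the intermediate claim is false and should be deleted rather than patched.
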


\begin{proof}
We have $H^*(\B, \B_{ss}) = \oplus_{i=0,1} H^*(\B^i, \B^i_{ss})$, where $(\B^i, \B^i_{ss}) = \A(E_1^i)_{h\gau(E_1^i)} \times (\A(E_2^{d-i})_{h\gau(E_2^{d-i})}, (\A(E_2^{d-i})_{ss})_{h\gau(E_2^{d-i})})$. By Lemma \ref{scoe} and the fact that $H^*_{\gau(E_1^i)}(\A(E_1^i)) \cong H^*(U(1)^g) \otimes H^*(BU(1))$, it is clear that $H^*(\B, \B_{ss})$ is a free module over $H^*(BU(1) \times BU(2))$. Because $U(3)$ and $U(1)\times U(2)$ share a maximal torus, $H^*(BU(3)) \rightarrow H^*(BU(1) \times BU(2))$ is a free extension, and the result follows.
\end{proof}

\subsection{Proof of Theorem \ref{thm2}}

We now turn our attention to the case $E = E_3^d$ is a rank three bundle over nonorientable $\Sigma$. Once again, we have a short exact sequence $$ 0 \rightarrow H^*_{\gau}(\A) \rightarrow H^*_{\gau}(\A_{ss}) \rightarrow H^*_{\gau}(\A, \A_{ss}) \rightarrow 0 $$
and by Remark \ref{commentst} $H^*_{\gau}(\A) \cong H^*(U(3)^g) \otimes H^*(BU(3))$ is a free module over $H^*(BU(3))$. Thus to prove Theorem \ref{thm2} it suffices to prove that $H^*_{\gau}(\A, \A_{ss})$ is torsion free over $H^*(BU(3))$.

To prove this we will use that map of pairs $ (\B, \B_{ss})\hookrightarrow (\A_{h\gau}, (\A_{ss})_{h\gau})$. This map is filtered by the pairs $(\B_{\leq p}, \B_{ss}) \rightarrow ((\A_{\leq p})_{h\gau}, (\A_{ss})_{h\gau})$, inducing an associated graded map.

\begin{lem}
The cohomology map 
\begin{equation}\label{slep}
H_{\gau}^*( \A, \A_{ss}) \rightarrow H^*(\B, \B_{ss})
\end{equation}
 is injective. Consequently, $H_{\gau}^*(\A, \A_{ss})$ is torsion free. 
\end{lem}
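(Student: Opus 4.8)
The plan is to exploit the same filtered-map machinery used in the proof of Theorem \ref{thethm}, but now applied to the relative pairs rather than the total spaces. Recall from Corollary \ref{ghro} that antiperfection of both stratifications gives us, for each of $(\A_{h\gau}, (\A_{ss})_{h\gau})$ and $(\B, \B_{ss})$, a filtration by subpairs $( (\A_{\leq p})_{h\gau}, (\A_{ss})_{h\gau})$ and $(\B_{\leq p}, \B_{ss})$ whose associated spectral sequences collapse on page one. The map $\phi$ of stratified spaces respects these filtrations, so it induces a morphism of collapsed spectral sequences, and hence a morphism of the associated graded modules. The key point is that on the associated graded level, the map reduces in column $p>0$ to exactly the maps $\alpha$ appearing in Lemma \ref{cuurry} — cup product by the equivariant Euler class of the quotient bundle $N_p(A)|_{\mZ_p}/N_p(B)|_{\mZ_p}$, the second summand of (\ref{hat3}) — which we have already shown to be injective because that Euler class is not a zero divisor. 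In column $p=0$ both graded pieces vanish (that is the content of the relative pair: the $p=0$ stratum is $\A_{ss}$, respectively $\B_{ss}$, so it is quotiented out).

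First I would make precise that the associated graded of $H_{\gau}^*(\A, \A_{ss})$ with respect to the kernel filtration $\ker( H^*(\A_{h\gau}, (\A_{ss})_{h\gau}) \to H^*((\A_{\leq p})_{h\gau}, (\A_{ss})_{h\gau}))$ is $\bigoplus_{p>0} H^{*-\lambda_p(a)}(\A_{(p,0,-p)})_{h\gau}$, and similarly $\gr^p H^*(\B,\B_{ss}) \cong H^{*-\lambda_p(b)}(\B_p)$ for $p>0$, both by Corollary \ref{ghro}; the $p=0$ terms are zero. Then I would observe that $\phi$, respecting the filtrations, induces $\gr(\phi)\co \gr H_{\gau}^*(\A,\A_{ss}) \to \gr H^*(\B,\B_{ss})$, and that in degree $p>0$ this map is identified — via the Thom isomorphisms and the homotopy equivalences $\B_p \sim (\A_{(p,0,-p)})_{h\gau}$, $\mZ_p \hookrightarrow$ both — with the map $\alpha$ of Lemma \ref{cuurry}, which is injective. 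Hence $\gr(\phi)$ is injective in every filtration degree.

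The final step is the standard algebraic fact that a filtered map of modules whose associated graded map is injective is itself injective (the filtrations here are finite in each cohomological degree because $\lambda_p \to \infty$, so there is no convergence subtlety). This gives injectivity of (\ref{slep}). For the torsion-free conclusion: $H^*(\B, \B_{ss})$ is a free, hence torsion-free, $H^*(BU(3))$-module by the preceding corollary, the map (\ref{slep}) is a map of $H^*(BU(3))$-modules, and a submodule of a torsion-free module is torsion-free.

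I expect the main obstacle to be purely bookkeeping: verifying carefully that the map induced by $\phi$ on the collapsed spectral sequences of the \emph{relative} pairs really does agree, column by column, with the map $\alpha$ from Lemma \ref{cuurry}, and in particular that the $p=0$ column genuinely contributes nothing on both sides (so that no information is lost passing to the relative setting). Once the identification of $\gr(\phi)$ in column $p > 0$ with cup product by $Eul_{\gau}(N_p(A)|_{\mZ_p}/N_p(B)|_{\mZ_p})$ is in hand, the non-zero-divisor property recorded in \S\ref{rnk3case} does all the remaining work, and the rest is formal homological algebra.
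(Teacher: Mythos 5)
Your proposal is correct and follows essentially the same route as the paper: it combines Corollary \ref{ghro} (collapse of the relative-pair spectral sequences, hence identification of the associated graded with the positive-column Thom-shifted groups) with the injectivity of the maps $\alpha$ from Lemma \ref{cuurry}, then passes from injectivity on associated gradeds to injectivity of (\ref{slep}) and concludes torsion-freeness from the freeness of $H^*(\B,\B_{ss})$. Your added remarks --- that the $p=0$ column vanishes on both sides and that the filtration is finite in each cohomological degree since $\lambda_p \to \infty$ --- are exactly the bookkeeping the paper leaves implicit.
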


\begin{proof}
By Corollary \ref{ghro} and Lemma \ref{cuurry}, the associated graded version of (\ref{slep}) is injective. It follows that (\ref{slep}) is also injective. Since $H_{\gau}^*(\A, \A_{ss})$ injects into a free $H^*(BU(3))$-module, it must be torsion free.
\end{proof}

\section{ Failure of antiperfection in higher rank}

Consider now the case that $E$ has rank $n >3$. Set $ M := H^*_{\gau(E)}( \A(E)_{ss})$. By (\ref{gropuid}), $M \cong H^*_{U(n)}(X)$, where $X$ is one of the two components of $Hom( \pi, U(n))$ depending on the Chern class of $E$. In particular, since $X$ is compact, $M$ is a finitely generated module for the polynomial ring $A := H^*(BU(n))$. By Hilbert's syzygy theorem, there exists a finite length free resolution 
\begin{equation}\label{syzygy}
 0 \rightarrow F_n \rightarrow ... \rightarrow F_0 \rightarrow M,
\end{equation}
where each $F_i$ is a direct sum of a finite number of degree shifted copies of $A$. It follows that the Hilbert series of $M$ must be of the form $ p(t) P_t( BU(n)) = p(t)/(1-t^2)(1-t^4)...(1-t^{2n})$ for some polynomial $p(t)$ with integer coefficients and for which $p(1) = \sum_{i=0}^n (-1)^i \rk_A (F_i)$. 

\begin{lem}\label{hugner}
The value of $p(t)$ at $t=1$ is $2^{(g+1)n-1}$.
\end{lem}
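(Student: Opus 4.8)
The plan is to compute $p(1)$ by evaluating the Hilbert series of $M = H^*_{U(n)}(X)$ at $t \to 1$ after multiplying by $(1-t^2)(1-t^4)\cdots(1-t^{2n})$, but since $M$ is not a free module this limit is delicate; instead I would extract $p(1)$ from the alternating sum of ranks of a free resolution, which by additivity of Euler characteristics equals the "generic rank" of $M$ over $A = H^*(BU(n))$. Concretely, $p(1) = \mathrm{rk}_A(M)$, the dimension over the fraction field of $A$ of $M \otimes_A \mathrm{Frac}(A)$. So the whole computation reduces to determining this generic rank. The key identity I would use is that generic rank is computed by localization at the maximal torus: since $H^*(BU(n)) \to H^*(BT)$ is a finite free extension of the same Krull dimension, and the Weyl group $W = S_n$ acts, we have $\mathrm{rk}_A(M) = \frac{1}{|W|}\dim_{\Q}\big( H^*_T(X) \otimes_{H^*(BT)} \mathrm{Frac}(H^*(BT))\big)$, and by the Atiyah–Bott–Berline–Vergne localization theorem (applicable since $T$ acts on the compact space $X = Hom(\pi, U(n))$) this equals $\frac{1}{|W|} \sum_{F} \dim_{\Q} H^*(F; \Q)$ summed over the connected components $F$ of the fixed point set $X^T$, i.e. $\frac{1}{|W|}\,\chi\!\big(X^T\big)$ counted without signs — but actually, more carefully, equals $\frac{1}{|W|}\dim_\Q H^*(X^T;\Q)$ since localization gives $H^*_T(X)_{\mathrm{loc}} \cong H^*_T(X^T)_{\mathrm{loc}} \cong H^*(X^T)\otimes \mathrm{Frac}(H^*(BT))$.

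Hence the concrete task is to identify $X^T$ and compute its total rational Betti number. A homomorphism $\rho: \pi_1(\Sigma) \to U(n)$ is fixed by conjugation by $T$ precisely when its image lies in the centralizer of $T$, which is $T$ itself; so $X^T = Hom(\pi_1(\Sigma), T)$ (intersected with the appropriate component, but summing over all components of $Hom(\pi,U(n))$ with torsion $c_1$ — equivalently I work with one $X$ at a time and track which component of $Hom(\pi,T)$ lands in it). Since $T = U(1)^n$ and $\pi_1(\Sigma)$ is the fundamental group of a connected sum of $g+1$ copies of $\mathbb{R}P^2$, we have $\pi_1(\Sigma)^{\mathrm{ab}} \cong \Z^g \oplus \Z/2$, so $Hom(\pi_1(\Sigma), U(1)) \cong Hom(\Z^g \oplus \Z/2, U(1)) \cong (S^1)^g \times \Z/2$, a space with $2^{g+1}$ components each homotopy equivalent to a torus $(S^1)^g$. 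Therefore $Hom(\pi_1(\Sigma), T) = Hom(\pi_1(\Sigma),U(1))^n$ has $(2^{g+1})^n = 2^{(g+1)n}$ components, each $\simeq (S^1)^{gn}$ with total Betti number $2^{gn}$, giving $\dim_\Q H^*(Hom(\pi,T)) = 2^{(g+1)n}\cdot 2^{gn}$. Now I must account for two things: dividing by $|W| = n!$ is wrong as stated — I need to divide instead by the factor relating $H^*(BU(n))$-rank to $H^*(BT)$-rank, which is $[\,\mathrm{Frac}(H^*(BT)):\mathrm{Frac}(H^*(BU(n)))\,] = |W| = n!$ — and I must restrict to the single component $X$, i.e. to those $n$-tuples of characters whose "total $c_1$" matches; exactly half of the $2^{(g+1)n}$ tuples have each sign, so $X^T$ has $\tfrac12 \cdot 2^{(g+1)n} = 2^{(g+1)n-1}$ components (the splitting comes from the $\Z/2$ factor of $H^2(\Sigma)$). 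Thus $\dim_\Q H^*(X^T) = 2^{(g+1)n-1}\cdot 2^{gn}$, and $p(1) = \mathrm{rk}_A(M) = \frac{1}{n!}\cdot 2^{(g+1)n-1}\cdot 2^{gn}\cdot(\text{something})$ — the arithmetic does not yet produce $2^{(g+1)n-1}$, which tells me I am over- or under-counting, and resolving this is the crux.

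The main obstacle, then, is pinning down the correct normalization: the claimed answer $2^{(g+1)n-1}$ is exactly the number of components of $X^T = Hom(\pi,T)_{(\text{fixed }c_1)}$, which strongly suggests that $p(1)$ should be read off not from $\dim_\Q H^*(X^T)$ but from $\dim_\Q H^0_T(X^T)_{\mathrm{loc}}$ — i.e. that the right computation uses the fact that $X$ is not just any space but one for which $H^*_T(X)$ is a free $H^*(BT)$-module (which does hold here since $T$-equivariant formality is known in the torus case), so $\mathrm{rk}_{H^*(BT)}H^*_T(X) = \dim_\Q H^*(X)$, and then $p(1) = \frac{1}{n!}\dim_\Q H^*(X^T)$ must be reconciled via the $W$-action on $X^T$. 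I expect the clean route is: $H^*_{U(n)}(X)\otimes\mathrm{Frac}(A) \cong \big(H^*_T(X)\otimes \mathrm{Frac}(H^*BT)\big)^W$, whose dimension is $\frac{1}{|W|}\dim_\Q H^*(X^T)$ when $W$ acts freely on the set of components of $X^T$ — but here $W=S_n$ permutes the $n$ character-slots and does NOT act freely on components (tuples with repeated characters have stabilizers), so instead $\dim = \sum_{\text{orbits}} \dim_\Q H^*(\text{component})^{\mathrm{Stab}}$, and carrying out this orbit count, using that generically the $n$ characters are distinct and that the Betti-number contributions of non-generic strata are lower-dimensional and cancel in the $t\to1$ "generic rank" sense, should collapse the sum to counting free $S_n$-orbits — of which there are $\frac{1}{n!}\cdot(\text{number of component-tuples with distinct entries, fixed }c_1)$. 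I would verify that this count equals $2^{(g+1)n-1}$ (the distinctness condition affects only a measure-zero, hence rank-zero, locus), completing the proof; the delicate point to get exactly right is why the non-free orbits contribute nothing to the generic rank, for which I would invoke that the fixed loci of non-identity Weyl elements on $Hom(\pi,T)$ have strictly smaller $H^*(BT)$-rank and so drop out after inverting the appropriate Euler classes.
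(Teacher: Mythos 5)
Your overall strategy is the paper's: read $p(1)$ off as the generic rank $\dim_Q(M\otimes_A Q)$ from the free resolution, then compute that rank by localizing to the $T$-fixed points $X^T = Hom(\pi,T)\cap X$. But your execution contains two concrete errors, and you acknowledge yourself that the arithmetic does not close; the speculative fix you propose (counting free versus non-free $W$-orbits of components of $X^T$) is not the right repair and would not be needed in a correct argument.

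First, the factor $\tfrac{1}{|W|}$ is wrong. Since $H^*(BT)$ is a free $H^*(BU(n))$-module and $H^*_T(X)\cong H^*_{U(n)}(X)\otimes_{H^*(BU(n))}H^*(BT)$ with rational coefficients, tensoring up to $\mathrm{Frac}(H^*(BT))$ gives a field extension of $M\otimes_A Q$, and dimension is preserved under field extension. Hence $\mathrm{rk}_A(M)=\mathrm{rk}_{H^*(BT)}\bigl(H^*_T(X)\bigr)=\dim_\Q H^*(X^T;\Q)$ with no Weyl division at all; the degree $|W|$ of the extension $\mathrm{Frac}(H^*(BU(n)))\subset\mathrm{Frac}(H^*(BT))$ cancels exactly against the rank-$|W|$ freeness of $H^*(BT)$ over $H^*(BU(n))$. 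Second, you miscount $\pi_0$: since $\pi_1(\Sigma)^{\mathrm{ab}}\cong\Z^g\oplus\Z/2$, the space $Hom(\pi_1(\Sigma),U(1))\cong (S^1)^g\times\Z/2$ has exactly $2$ connected components (each a connected $g$-torus), not $2^{g+1}$. Consequently $Hom(\pi,T)$ has $2^n$ components, $X^T$ consists of half of them, i.e.\ $2^{n-1}$ components each homeomorphic to $(S^1)^{gn}$, and the total Betti number is $2^{n-1}\cdot 2^{gn}=2^{(g+1)n-1}$. The numerical coincidence that your (incorrect) component count happened to equal the target value led you away from the straightforward computation. With these two corrections your argument becomes exactly the paper's proof.
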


\begin{proof}
Let $Q$ denote the quotient field of $A$. Because localization preserves exactness, we may tensor the sequence (\ref{syzygy}) by $Q$ to obtain an exact sequence of vector spaces over $Q$. We deduce the identity $ \dim_Q M\otimes_A Q = \sum_{i=0}^n (-1)^i \dim_Q F_i \otimes_A Q = p(1)$. On the other hand, the Borel localization theorem identifies $\dim_Q M \otimes_A Q$ with the sum of Betti numbers of the space $X^T$, where $T$ is a maximal torus in $U(n)$ and $X^T$ is the locus of $T$-fixed points. 

The space $X^T$ is equal to half the components of $Hom( \pi, U(n))^T$, which because $T$ is maximal abelian can be identified with  $Hom( \pi, T)$. Since $T$ has rank $n$, we have $ Hom( \pi, T) \cong (Hom( \pi, U(1)))^{\times n}$, and $ Hom(\pi, U(1))$ was shown in \cite{baird2008msf} to have two components, each homeomorphic to $(S^1)^{\times g}$. Thus $X^T$ has $2^{n-1}$ components, each homeomorphic to $(S^1)^{ng}$. Summing the Betti numbers of $X^T$ completes the argument.
\end{proof}

In particular, it follows that

\begin{equation}\label{limit}
\lim_{t \rightarrow 1} P_t^{\gau}( \A_{ss})\prod_{i=1}^n(1-t^{2i}) = 2^{(g+1)n-1}.
\end{equation}
Our proof of Proposition \ref{propfour} proceeds by showing that (\ref{limit}) is inconsistent with (\ref{shap}).

\begin{proof}[Proof of Proposition \ref{propfour}]

Suppose for the sake of contradiction, that the Morse stratification is $\gau$-equivariantly antiperfect. Then (\ref{shap}) implies that,

\begin{equation}\label{check}
	\lim_{t \rightarrow 1} (P_t^{\gau}(\A_{ss}) -P_t^{\gau}(\A))\prod_{i=1}^n(1-t^{2i})=  \lim_{t \rightarrow 1} \sum_{\mu \in I'} t^{\lambda_{\mu}-1}P_t^{\gau}(\A_{\mu})\prod_{i=1}^n(1-t^{2i})
\end{equation}
where $I'$ indexes all nonsemistable strata. Combining Remark \ref{commentst} with (\ref{limit}) shows that (\ref{check}) equals $2^{(g+1)n-1} - 2^{gn}$.

Because $P_t^{\gau}(\A_{\mu})$ is a power series with nonnegative coefficients it is clear that for any subset $J \subset I'$, 
$$ \lim_{t \rightarrow 1} \sum_{\mu \in J} t^{\lambda_{\mu}-1}P_t^{\gau}(\A_{\mu})\prod_{i=1}^n(1-t^{2i})\leq \lim_{t \rightarrow 1} \sum_{\mu \in I'} t^{\lambda_{\mu}-1}P_t^{\gau}(\A_{\mu})\prod_{i=1}^n(1-t^{2i}) $$
whenever the limit on the left exists. Now choose $J = \{ (d,0,0,...,0,-d)| d>0\}$. We will show that the limit of the sum indexed by $J$ has value larger than $2^{(g+1)n-1} - 2^{gn}$ producing a contradiction. 

By (\ref{jeff}), we have $ (\A_{(d,0,...,0,-d)})_{h\gau} \cong \A(\tilde{L}_d)_{h \gau(\ti{L}_d)} \times (\A(E_0)_{ss})_{h\gau(E_0)}$ where $\ti{L}_d$ is a degree $d$ line bundle over the double cover $\ti{\Sigma}$ and $E_0$ is a rank $n-2$ bundle over $\Sigma$. Applying the Kunneth formula,

$$P_t^{\gau}(\A_{(d,0,...,-d)}) = \frac{(1+t)^{2g}}{1-t^2} \frac{q_d(t)}{\prod_{i=1}^{n-2}(1-t^{2i})}$$
where $q_d(t)$ is a polynomial satisfying $q_d(1) = 2^{(g+1)(n-2)-1}$ by Lemma \ref{hugner}. Here $q_d(t)$ depends only on the parity of $d$, so in particular the convergence $q_d(t) \rightarrow q_d(1)$ is uniform in $d$. Applying (\ref{index}), the index of the stratum $\A_{(d,0,...,0,-d)}$ is $\lambda_d := 2d(n-1) + (g-1)(2n-3)$. Calculating:
\[\begin{split} 
	\lim_{t \rightarrow 1} \sum_{\mu \in J} t^{\lambda_{\mu}-1}P_t^{\gau}(\A_{\mu})\prod_{i=1}^n(1-t^{2i}) &= \lim_{t\rightarrow 1} \frac{(1-t^{2n-2})(1-t^{2n})(1+t)^{2g}}{1-t^2} \sum_{d=1}^{\infty} q_d(t)t^{\lambda_d}\\ &= n 2^g q_1(1)= n 2^{(g+1)n -3}
\end{split}\]
which is greater than $2^{(g+1)n-1} - 2^{gn}$ when $n \geq 4$.

\end{proof}

\bibliographystyle{plain}

\bibliography{TomReferences}

\end{document}